\newcommand{\mtx}[1]{\boldsymbol{#1}}
\newcommand{\vct}[1]{\boldsymbol{#1}}
\newtheorem{remark}{Remark}
\newtheorem{definition}{Definition}
\newtheorem{lemma}{Lemma}
\title[An iterative 3D HPS-discretized Helmholtz solver]{An iterative solver for the HPS discretization applied to three dimensional Helmholtz problems}
\author{Jos\'e Pablo Lucero Lorca}
\thanks{\url{https://pablo.world/math}}
\email{pablo.lucero@colorado.edu}
\author{Natalie Beams}
\author{Damien Beecroft}
\author{Adrianna Gillman}
\thanks{\url{https://www.colorado.edu/amath/adrianna-gillman}}
\email{adrianna.gillman@colorado.edu}
\address[J. P. Lucero Lorca, A. Gillman]{Department of Applied Mathematics, University of Colorado at Boulder}
\thanks{Submitted to the editors 3 December 2021. This work was funded by Total Energies and NSF.}
\renewcommand{\email}[2][]{%
  \ifx\emails\@empty\relax\else{\g@addto@macro\emails{,\space}}\fi%
  \@ifnotempty{#1}{\g@addto@macro\emails{\textrm{(#1)}\space}}%
  \g@addto@macro\emails{#2}%
}
\newcommand\scalemath[2]{\scalebox{#1}{\mbox{\ensuremath{\displaystyle#2}}}}
\newcommand\w[1]{\makebox[2.5em]{$#1$}}
\newcommand{\mtwo}[4]{\left[\begin{array}{cc} #1 & #2 \\ #3 & #4 \end{array}\right]}
\newcommand{\vtwo}[2]{\left[\begin{array}{cc} #1 \\ #2 \end{array}\right]}
\begin{document}

\begin{abstract}
  This manuscript presents an efficient solver for the linear system
  that arises from the Hierarchical Poincar\'e-Steklov (HPS)
  discretization of three dimensional variable coefficient Helmholtz
  problems.  Previous work on the HPS method has tied it with a direct
  solver.  This work is the first efficient iterative solver for the
  linear system that results from the HPS discretization.  The solution
  technique utilizes \texttt{GMRES} coupled with a locally homogenized
  block-Jacobi preconditioner. The local nature of the discretization
  and preconditioner naturally yield the matrix-free application of the
  linear system.  Numerical results illustrate the performance of the
  solution technique.  This includes an experiment where a problem
  approximately $100$ wavelengths in each direction that requires more
  than a billion unknowns to achieve approximately 4 digits of accuracy
  takes less than $20$ minutes to solve.
  
  \smallskip
  
  \textbf{Key words.} Helmholtz, HPS, block-Jacobi, Domain
  Decomposition, Poincar\'e-Steklov, GMRES.

  \textbf{AMS subject classifications.} 65N22, 65N35, 65N55, 65F05

\end{abstract}

\maketitle

\section{Introduction}

The efficient simulation of wave propagation phenomena in 3D
heterogeneous media is of great research interest in many areas.  This
document considers wave propagation problems that are captured by the
variable coefficient Helmholtz problem with impedance boundary
conditions given below
\begin{align}
  \begin{aligned}
    -\Delta u(\boldsymbol{x}) - \kappa^2 (1-b(\boldsymbol{x}))
    u(\boldsymbol{x}) =& s(\boldsymbol{x}), \quad \boldsymbol{x} \in
    \Omega \text{, and} \\
    \frac{\partial u}{\partial n} + i \eta u(\boldsymbol{x}) =&
    t(\boldsymbol{x}), \quad \boldsymbol{x} \in \partial\Omega.
  \end{aligned}\label{eqn:diffhelm}
\end{align}
where $\Omega = (0,1)^3 \subset \mathbb{R}^3$, $u(\boldsymbol{x}):
\Omega \rightarrow \mathbb{C}$ is the unknown solution, $\kappa \in
\mathbb{R}$ is the so-called \emph{wave number},
$b(\boldsymbol{x}):\Omega \rightarrow \mathbb{C}$ is a given smooth
scattering potential and $n$ is the outward normal unit vector to the
boundary of the domain. The functions $s(\boldsymbol{x}):\Omega
\rightarrow \mathbb{C}$ and $t(\boldsymbol{x}):\partial\Omega
\rightarrow \mathbb{C}$ are assumed to be smooth functions.  The usage
of impedance boundary conditions such as the one in
(\ref{eqn:diffhelm}) can be found in diffraction theory, acoustics,
seismic inversion, electromagnetism and others
\cite{KyurkchanSmirnova2016,
KinslerFreyCoppensSanders2000,Lighthill1978, ChandlerWilde1997}.  For
a detailed exposition on the state-of-the-art literature and
applications see \cite[\S1.1,\S1.2]{GrahamSauter2020} and references
therein.

This manuscript presents an efficient technique for solving the linear
system that results from the discretization of \eqref{eqn:diffhelm}
with the Hierarchical Poincar\'e-Steklov (HPS) method.  Roughly
speaking, the HPS method is a discretization technique based on local
``element''-wise spectral collocation.  Continuity of the solution and
the flux are enforced strongly at the interface between elements.  For
the case of Helmholtz equation, the continuity of incoming and
outgoing impedance data is enforced between elements.  As with other
element-wise discretization techniques, the matrix that results from
the HPS is sparse where all non-zero entries correspond to an element
interacting with itself or with a neighbor.  Such sparse matrices can
be applied to a vector in the so-called \emph{matrix-free} format which means
that each sub-matrix can be applied by sweeping over the elements and
never building the full matrix.

The efficient solution for the linear system resulting from the HPS
discretization presented in this paper utilizes a locally homogenized
block Jacobi preconditioned \texttt{GMRES} \cite{SaadSchultz1986}
solver. The application of the proposed block Jacobi preconditioner
and the matrix itself is done via matrix-free operations and exploits
the tensor product nature of the element wise discretization matrices.
The proposed preconditioner requires inversion of the Jacobi blocks
with a homogeneous coefficient at element level. The local nature of
the blocks and the preconditioner make the solution technique
naturally parallelizable in a distributed memory model. Numerical
results show that the solution technique is efficient and capable of
tackling mid-frequency problems with a billion degrees of freedom in
less than thirty minutes in parallel.

\subsection{Prior work on the HPS method} \label{sec:priorHPS}
The HPS method is a recently developed discretization technique
\cite{Martinsson2013, GillmanMartinsson2014,
GillmanBarnettMartinsson2015, HaoMartinsson2016,
GeldermansGillman2019, BeamsGillmanHewett2020}, based on local
spectral collocation where the continuity of the solution and the flux
is enforced via Poincar\'e-Steklov operators.  For general elliptic
problems, it is sufficient enough to use the Dirichlet-to-Neumann
operator \cite[Rem. 3.1]{Martinsson2009}. For Helmholtz problems, the
Impedance-to-Impedance (ItI) operator is used
\cite{GillmanBarnettMartinsson2015, BeamsGillmanHewett2020}.  By using
this unitary operator for the coupling of elements, the HPS method is
able to avoid artificial resonances and does not appear to observe the
so-called \emph{pollution effect} \cite{GillmanBarnettMartinsson2015}.
The paper \cite{BeckCanzaniMarzuola2021} provides some analysis
supporting the numerical results seen in practice.

\begin{remark}
  Roughly speaking, pollution
  \cite[Def. 2.3]{BabuskaIhlenburgPaikSauter1995} is the need to
  increase the number of degrees of freedom per wavelength as the wave
  number $\kappa$ grows in order to maintain a prescribed accuracy.
  This problem is known to happen when Galerkin finite element
  discretizations in two and more space dimensions are applied to
  Helmholtz problems \cite{BaylissGoldsteinTurkel1985,
    BabuskaIvoSauter1997, BabuskaIhlenburgPaikSauter1995}.
\end{remark}

Thus far the HPS method has only been applied to problems when coupled
with a nested dissection \cite{George1973} type direct solver.  For
two dimensional problems, this direct solver is easily parallelizable
using shared memory \cite{BeamsGillmanHewett2020} and integrated into
an adaptive version of the discretization technique
\cite{GeldermansGillman2019}.  Additionally, the computational cost of
the direct solver is $O(N^{3/2})$, where $N$ is the number of
discretization points, for two dimensional problems with a small
constant. For many problems the direct solver can be accelerated to
have a computational cost that scales linearly with respect $N$ in two
dimensions \cite{GillmanMartinsson2014} and $O(N^{4/3})$ in three
dimensions \cite{HaoMartinsson2016}.

While direct solvers have been proven to be robust for various
problems and parameters, they scale superlinearly in flops and storage
for three dimensional problems.  Additionally, there is a large amount
of communication in the construction of solver making it complicated
to get highly efficient parallel implementations.  The application of
the precomputed solver is less cumbersome in a parallel environment.
Essentially, a user has to decide if they have
enough right hand sides to justify the cost of building a direct
solver versus just using an iterative solver.  When solving very large
problems, in the order of the billion degrees of freedom for a single
right hand side, iterative methods are (in general) faster. 
For
instance, shifted-Laplace preconditioners
\cite{ErlanggaVuikOosterlee2004, KimLee1996} introduce an imaginary
shift $\epsilon$ to the frequency $\omega$ at subdomain level for the
preconditioner and show a complexity $\mathcal{O}(N^{4/3})$ for 3D
problems with $N=\mathcal{O}(\omega^3)$
\cite{CalandraGrattonPinelVasseur2012, CoolsRepsVanroose2014,
RiyantiKononovErlanggaVuikOosterleePlessix2007}. In the search for a
scalable Helmholtz solver, many techniques have been used including
efficient coarse problems to precondition Krylov solvers in the
context of multigrid methods and domain decomposition, local
eigenspaces for heterogeneous media, complex-symmetric least-square
formulations, numerical-asymptotic hybridization and block-Krylov
solvers (see \cite[\S4]{GanderZhang2019} and references therein for a
detailed description).  If a problem
is poorly conditioned, it may be beneficial to use a direct solver instead
of an iterative solver that may have trouble converging even with a 
preconditioner.

This manuscript presents the first iterative solution technique for
the linear system that arises from the HPS discretization.

\subsection{Related work} \label{sec:related}
There are several spectral collocation techniques that are similar to
the HPS discretization in spirit (such as
\cite{PfeifferKidderScheelTeukolsky2003}).  A thorough review of those
methods is provided in \cite{Martinsson2013}.

The block Jacobi preconditioner presented in this paper can be viewed
as a homogenized non-overlapping Schwarz domain decomposition
preconditioner.  Domain decomposition preconditioned solvers are
widely used for solving large PDE discretizations with a distributed
memory model parallelization.  These techniques rely on the algorithm
being able to perform the bulk of the calculation \emph{locally} in
each subdomain and limiting the amount of communication between
parallel processes (see \cite{SmithBjorstadGropp1996,
ToselliWidlund2005} and references therein). Whenever possible using
local tensor product formulations for the discretized operator and
preconditioner, matrix-free techniques can accelerate the
implementation and minimize memory footprint of domain decomposition
preconditioned solvers (e.g. \cite{Knyazev2001,
KronbichlerKormann2019}). By using a high order Chebyshev tensor
product local discretization and sparse inter-element operators, the
solution technique presented in this manuscript is domain decomposing
in nature and the implementation matrix-free to a large extent.

The implementation of the algorithms presented in this paper are
parallelized using the \texttt{PETSC} library \cite{petsc-web-page,
petsc-user-ref, petsc-efficient} with a distributed memory model.

\subsection{Outline}
The manuscript begins by describing the HPS discretization (in section
\ref{sec:dis}) and how the linear system that results from that
discretization can be applied efficiently in a matrix-free manner in
section \ref{sec:sparse}.  Then section \ref{sec:pre} presents the
proposed solution technique, including a block Jacobi preconditioner
and its fast application.  Next section \ref{sec:par} presents the
technique for implementing the solver on a distributed memory
platform.  Numerical results illustrating the performance of the
solver are presented in section \ref{sec:numerics}.  Finally, the
paper concludes with a summary highlighting the key features and the
impact of the numerical results in section \ref{sec:con}.

\section{Discretizing via the HPS method}
\label{sec:dis}
The HPS method applied to \eqref{eqn:diffhelm} is based on
partitioning the geometry $\Omega$ into a collection of small boxes.
In previous papers on the HPS method, these small boxes have been
called \emph{leaf} boxes but the term \emph{element} from the finite
element literature can also be applied to them.  Each element is
discretized with a modified spectral collocation method and continuity
of impedance data is used to \emph{glue} the boxes together.  This
section reviews the discretization technique.

\begin{remark}
For simplicity of presentation, all the leaf boxes are taken to be the
same size in this manuscript.  In other words, we consider a uniform
mesh or domain partitioning for simplicity of presentation.  The
technique can easily be extended to a nonuniform mesh with the use of
interpolation operators and maintaining the appropriate ratio in size
between neighboring boxes (as presented in
\cite{GeldermansGillman2019}).
\end{remark}

\subsection{Leaf discretization}\label{sec:leafdiscretization}
This section describes the discretization technique that is applied to
each of the leaf boxes. This corresponds to discretizing equation
\eqref{eqn:diffhelm} with a fictitious impedance boundary condition on
each leaf box via a modified classical spectral collocation
technique. The modified spectral collocation technique was first
presented in \cite{GeldermansGillman2019} for two dimensional
problems.

Consider the leaf box (or element) $\Omega^\tau \subset \Omega$ such
as the one illustrated in Figure \ref{fig:chebpoints}.  The modified
spectral collocation technique begins with the classic $n_c\times
n_c\times n_c$ tensor product Chebyshev grid and the corresponding
standard differentiation matrices $\widetilde{\mtx{D}}_x$,
$\widetilde{\mtx{D}}_y$ and $\widetilde{\mtx{D}}_z$, as defined in
\cite{Trefethen2000,Boyd2001}, on $\Omega^\tau$. Here $n_c$ denotes
the number of Chebyshev nodes in one direction. The entries of
$\widetilde{\mtx{D}}_x$, $\widetilde{\mtx{D}}_y$ and
$\widetilde{\mtx{D}}_z$ corresponding to the interaction of the corner
and edge points with the points on the interior of $\Omega^\tau$ are
zero thanks to the tensor product basis.  Thus these discretization
nodes can be removed without impacting the accuracy of the
corresponding derivative operators.  Let $\mtx{D}_x$, $\mtx{D}_y$ and
$\mtx{D}_z$ denote the submatrices of $\widetilde{\mtx{D}}_x$,
$\widetilde{\mtx{D}}_y$ and $\widetilde{\mtx{D}}_z$ that approximate
the derivatives of functions with a basis that does not have
interpolation nodes at the corner and edge points entries.

Based on a tensor product grid without corner and edge points, let
$I_i^\tau$ denote the index vector corresponding to points in the
interior of $\Omega^\tau$ and $I_b^\tau$ denote the index vector
corresponding to points on the faces at the boundary of
$\Omega^\tau$. Figure \ref{fig:chebpoints} illustrates this local
ordering on a leaf box with $n_c = 8$.  Let $n$, $n_b$ and $n_i$
denote the number of discretization points on a leaf box, the number
of discretization points on the boundary of a leaf box and the number
of discretization points in the interior of a leaf box, respectively.
Thus the total number of discretization points on a leaf box is $n =
n_i+n_b$.  For the three-dimensional discretization, $n_i = (n_c-2)^3$
and $n_b = 6(n_c-2)^2$. Ordering the points in the interior first, the
local indexing of the $n$ discretization points associated with
$\Omega^\tau$ is $I^\tau = \left[I^\tau_i I^\tau_b\right]$.

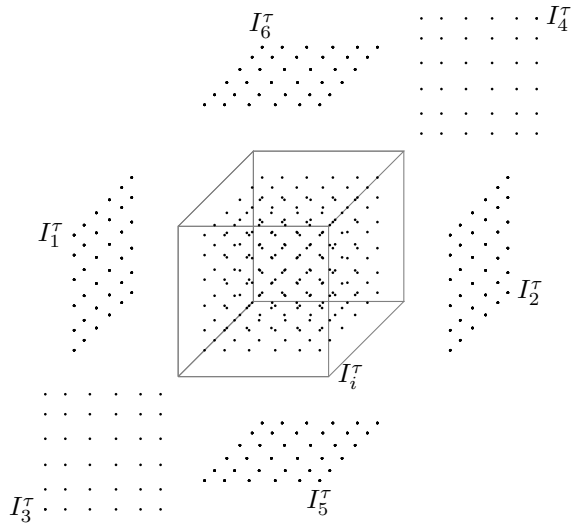
\begin{figure}
  \centering
  \def\deg{8}
  \def\ps{0.25pt}
  \def\degm{7}
  \begin{tikzpicture}[scale=2.]
   \draw[color=gray] (0,0) -- (1,0) -- (1,1) -- (0,1) -- (0,0);
   \draw[color=gray] (0.5,0.5) -- (1.5,0.5) -- (1.5,1.5) -- (0.5,1.5) -- (0.5,0.5);
   \draw[color=gray] (0,0) -- (1,0) -- (1.5,0.5) -- (0.5,0.5) -- (0,0);
   \draw[color=gray] (0,1) -- (1,1) -- (1.5,1.5) -- (0.5,1.5) -- (0,1);
   \draw[color=gray] (0,0) -- (0.5,0.5) -- (0.5,1.5) -- (0,1) -- (0,0);
   \draw (1,0) node[anchor=west] {$I_i^\tau$};

   \foreach \x in {2,...,\degm}
   \foreach \y in {2,...,\degm}
   {
     \fill[black]({1.+1-cos(180.*\x/(\deg+1))/2.},{1.+1-cos(180.*\y/(\deg+1))/2.})circle(\ps);
   }
   \fill[black]({1.+1-cos(180.*(\deg-1)/(\deg+1))/2.},{1.+1-cos(180.*(\deg-1)/(\deg+1))/2.})circle(\ps) node[anchor=west] {$I_4^\tau$} ;

   \foreach \z in {2,...,\degm}
   \foreach \x in {2,...,\degm}
   \foreach \y in {2,...,\degm}
   {
     \fill[black]({0.25-cos(180.*\x/(\deg+1))/2.+0.5-cos(180.*\z/(\deg+1))/4.},{0.25-cos(180.*\y/(\deg+1))/2.+0.5-cos(180.*\z/(\deg+1))/4.})circle(\ps);
     \fill[black]({0.25-cos(180.*\x/(\deg+1))/2.+0.5-cos(180.*\z/(\deg+1))/4.},{-0.75+0.25-cos(180.*\z/(\deg+1))/4.})circle(\ps);
     \fill[black]({0.75+0.25-cos(180.*\z/(\deg+1))/4.+1},{0.25-cos(180.*\y/(\deg+1))/2.+0.5-cos(180.*\z/(\deg+1))/4.})circle(\ps);
     \fill[black]({0.25-cos(180.*\x/(\deg+1))/2.+0.5-cos(180.*\z/(\deg+1))/4.},{0.75+0.25-cos(180.*\z/(\deg+1))/4.+1})circle(\ps);
     \fill[black]({-0.75+0.25-cos(180.*\z/(\deg+1))/4.},{0.25-cos(180.*\y/(\deg+1))/2.+0.5-cos(180.*\z/(\deg+1))/4.})circle(\ps);
   }
   \fill[black]({0.25-cos(180.*(\deg-1)/(\deg+1))/2.+0.5-cos(180.*(2.)/(\deg+1))/4.},{-0.75+0.25-cos(180.*(2.)/(\deg+1))/4.})circle(\ps) node[anchor=north] {$I_5^\tau$};
   \fill[black]({0.75+0.25-cos(180.*(\deg-1)/(\deg+1))/4.+1},{0.25-cos(180.*2./(\deg+1))/2.+0.5-cos(180.*(\deg-1)/(\deg+1))/4.})circle(\ps) node[anchor=west] {$I_2^\tau$};
   \fill[black]({0.25-cos(180.*2./(\deg+1))/2.+0.5-cos(180.*(\deg-1)/(\deg+1))/4.},{0.75+0.25-cos(180.*(\deg-1)/(\deg+1))/4.+1})circle(\ps) node[anchor=south] {$I_6^\tau$} ;
   \fill[black]({-0.75+0.25-cos(180.*2./(\deg+1))/4.},{0.25-cos(180.*(\deg-1)/(\deg+1))/2.+0.5-cos(180.*2./(\deg+1))/4.})circle(\ps) node[anchor=east] {$I_1^\tau$} ;
   
   \foreach \x in {2,...,\degm}
   \foreach \y in {2,...,\degm}
   {
     \fill[black]({-1.+0.5-cos(180.*\x/(\deg+1))/2.},{-1.+0.5-cos(180.*\y/(\deg+1))/2.})circle(\ps);
   }
   \fill[black]({-1.+0.5-cos(180.*2./(\deg+1))/2.},{-1.+0.5-cos(180.*2./(\deg+1))/2.})circle(\ps) node[anchor=east] {$I_3^\tau$};
   
  \end{tikzpicture}
  \caption{Illustration of the degree 8 Chebyshev tensor-product
    points on a cube $\Omega^\tau$ and the local numbering of the nodes.
    $I^\tau_i$ denotes the interior nodes and $I^\tau_j$ for $j =1,\ldots,
    6$ denotes the nodes on face $j$.}
  \label{fig:chebpoints}
\end{figure} 

We approximate the solution to \eqref{eqn:diffhelm} on $\Omega^\tau$
using the \textit{new} spectral collocation operators by
\begin{equation}
  \boldsymbol{A}_c^\tau := -\boldsymbol{D}_x^2 -
  \boldsymbol{D}_y^2 - \boldsymbol{D}_z^2 -
  \boldsymbol{C}^\tau
\end{equation}
where $\boldsymbol{C}^\tau$ is the diagonal matrix with entries
$\{\kappa^2 (1 - b(\boldsymbol{x}_k))\}_{k=1}^{n}$.

To construct the operator for enforcing the impedance boundary
condition, we first order the indices in $I^\tau_b$ according to the
faces.  Specifically, $I_b^\tau = [I_1^\tau, I_2^\tau, I_3^\tau,
I_4^\tau, I_5^\tau, I_6^\tau]$ where $I_1$ denotes the index of the
points on the left of the box, $I_2$ on the right, etc.  Figure
\ref{fig:chebpoints} illustrates a numbering of the faces for a box
$\Omega^\tau$.  The operator that approximates the normal derivative
on the boundary of the box $\Omega^\tau$ is the $n_b \times n$ matrix
$\mtx{N}$ given by
\begin{equation}
  \boldsymbol{N} = \left(
    \begin{matrix}
      -\boldsymbol{D}_x(I^\tau_1,I^\tau) \\
       \boldsymbol{D}_x(I^\tau_2,I^\tau) \\
      -\boldsymbol{D}_y(I^\tau_3,I^\tau) \\
       \boldsymbol{D}_y(I^\tau_4,I^\tau) \\
      -\boldsymbol{D}_z(I^\tau_5,I^\tau) \\
       \boldsymbol{D}_z(I^\tau_6,I^\tau) \\      
    \end{matrix}
  \right). \label{eqn:flux}
\end{equation}
Thus the outgoing impedance operator is approximated by the $n_b\times
n$ matrix $\mtx{F}^\tau$ defined by
\begin{equation*}
  \mtx{F}^\tau = \boldsymbol{N} + i \eta
  \boldsymbol{I}_{n}\left(I^\tau_b,I^\tau\right),
\end{equation*}
where $\boldsymbol{I}_{n}$ is the identity matrix of size $n \times
n$.

With this, the linear system that approximates the solution to
(\ref{eqn:diffhelm}) fictitious boundary data $\hat{f}(x)$ on a box
$\Omega^\tau$ is given by
\begin{equation}
\mtx{A}^\tau \vtwo{\vct{u}_i}{\vct{u}_b}= \left[\begin{array}{l}
\mtx{A}_c^\tau(I_i^\tau,I^\tau) \\ 
\mtx{F}^\tau
\end{array}\right]
\vtwo{\vct{u}_i}{\vct{u}_b}= \vtwo{\vct{s}}{\vct{\hat{f}}}
\label{eqn:leaf}
\end{equation}
where the vector $\vct{u}$ denotes the approximate solution,
$\vct{u}_i = \vct{u}(I_i^\tau)$, $\vct{u}_b = \vct{u}(I_b^\tau)$,
$\vct{\hat{f}}$ is the fictitious impedance boundary data at the
discretization points on $\partial \Omega^\tau$, and $\vct{s}$ is the
right hand side of the differential operator in \eqref{eqn:diffhelm}
evaluated at the discretization points in the interior of
$\Omega^\tau$.
 
\subsection{Communication between leaf boxes}
\label{sec:comm}
While the previous section provides a local discretization, it does
not provide a way for the leaves (elements) to communicate information
between each other.  As mentioned previously, the HPS method
communicates information between elements by enforcing conditions on
the continuity of the approximate solution and its flux through shared
boundaries.  For Helmholtz problems, this is done via continuity of
impedance boundary data.  In other words, the incoming impedance data
for one element is equal to (less a sign) the outgoing impedance data
from its neighbor along the shared face.  Incoming and outgoing
impedance boundary data and the relationship are defined as follows:
\begin{definition}
    Fix $\eta \in \mathbb{C}$, and $\mathbb{R}(\eta) \ne 0$. Let
  \begin{align}
    \begin{aligned}
      f :=& \frac{\partial u}{\partial n} + i \eta u \big|_\Gamma \text{, and} \\
      g :=& \frac{\partial u}{\partial n} - i \eta u \big|_\Gamma
    \end{aligned}
  \end{align}
  be Robin traces of $u$ at an arbitrary interface
  $\Gamma$. $\frac{\partial u}{\partial n}$ denotes the normal
  derivative of $u$ in the direction of the normal vector $n$. We refer
  to $f$ and $g$ as the \emph{incoming} and \emph{outgoing}
  (respectively) impedance data.
\end{definition}

To illustrated how this condition is enforced in the HPS method
consider the two neighboring boxes $\alpha $ and $\beta$ in Figure
\ref{fig:2leaves}.  The shared interface is $\Gamma$.  Note that
$\frac{\partial u}{\partial n_\alpha} = - \frac{\partial u}{\partial
n_\beta} $ and $ u_\alpha \big|_\Gamma = u_\beta \big|_\Gamma$.  With
this information, it is easy to show that
\begin{equation}
  f^\alpha_\Gamma = -g^\beta_\Gamma.
  \label{eq:inter}
\end{equation}
      
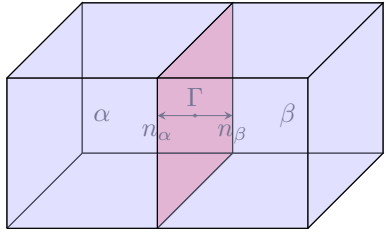
\begin{figure}
  \centering
  \begin{tikzpicture}[scale=2,draw=black]
    \draw (0.25,0.25+0.5) node {};
    \draw (0.5+0.25,0.25+0.5) node[anchor=east,color=black] {$\alpha$};
    \filldraw[fill=red!40] (1,0) -- (1.5,0.5) -- (1.5,1.5) -- (1,1) -- (1,0);
    \draw (1+0.25,0.25+0.5) circle(0.25pt) node[anchor=south,color=black] {$\Gamma$} ;
    \draw[-stealth] (1+0.25,0.25+0.5) -- (1+0.25+0.25,0.25+0.5) node[anchor=north,color=black] {$n_\beta$};
    \draw[-stealth] (1+0.25,0.25+0.5) -- (1+0.25-0.25,0.25+0.5) node[anchor=north,color=black] {$n_\alpha$};
    \filldraw[fill=blue!20,fill opacity=0.35] (0.5,0.5) -- (1.5,0.5) -- (1.5,1.5) -- (0.5,1.5) -- (0.5,0.5);
    \filldraw[fill=blue!20,fill opacity=0.35] (0,0) -- (1,0) -- (1.5,0.5) -- (0.5,0.5) -- (0,0);
    \filldraw[fill=blue!20,fill opacity=0.35] (0,0) -- (0.5,0.5) -- (0.5,1.5) -- (0,1) -- (0,0);
    \filldraw[fill=blue!20,fill opacity=0.35] (0,0) -- (1,0) -- (1,1) -- (0,1) -- (0,0);
    \filldraw[fill=blue!20,fill opacity=0.35] (0,1) -- (1,1) -- (1.5,1.5) -- (0.5,1.5) -- (0,1);
    \draw (1.5+0.25,0.25+0.5) node[anchor=west,color=black] {$\beta$};
    \filldraw[fill=blue!20,fill opacity=0.35] (1.5,0.5) -- (2.5,0.5) -- (2.5,1.5) -- (1.5,1.5) -- (1.5,0.5) -- cycle;
    \filldraw[fill=blue!20,fill opacity=0.35] (1,0) -- (2,0) -- (2.5,0.5) -- (1.5,0.5) -- (1,0) -- cycle;
    \filldraw[fill=blue!20,fill opacity=0.35] (1,1) -- (2,1) -- (2.5,1.5) -- (1.5,1.5) -- (1,1) -- cycle;
    \filldraw[fill=blue!20,fill opacity=0.35] (2,0) -- (2.5,0.5) -- (2.5,1.5) -- (2,1) -- (2,0);
    \filldraw[fill=blue!20,fill opacity=0.35] (1,0) -- (2,0) -- (2,1) -- (1,1) -- (1,0) -- cycle;
  \end{tikzpicture}
  \caption{Illustration of two leaf boxes sharing an interface.}
  \label{fig:2leaves}
\end{figure}

The HPS discretization enforces (\ref{eq:inter}) strongly.  The
spectral collocation operator approximating the outgoing impedance data on any leaf can be
constructed via the same techniques presented in section
\ref{sec:leafdiscretization}.  Specifically, let $\mtx{N}$ denote the
Chebyshev differentiation matrix that approximates the flux as
presented in equation \eqref{eqn:flux}.  Then the outgoing impedance
operator is
\begin{equation}
  \mtx{G}^\tau = \boldsymbol{N} - i \eta
  \boldsymbol{I}_{n}\left(I^\tau_b,I^\tau\right),\label{eqn:G}
\end{equation}
where $\boldsymbol{I}_{n}$ is the $n \times n$ identity matrix, and
$\eta$ is an impedance parameter.  In practice, we set $\eta =
\kappa$.

So in the full HPS discretization, the ficticuous boundary condition
gets replaced with either the true boundary condition or the 
equation enforcing the continuity of the impedance boundary
data from (\ref{eq:inter}).

For example, consider the task of applying the HPS method
to solve (\ref{eqn:diffhelm}) on the geometry illustrated in Figure \ref{fig:2leaves}.  The shared interface $\Gamma$ is the second face on $\alpha$
and the first face on $\beta$ according
to the numbering in Figure \ref{fig:chebpoints}.  Using an extra set of 
discretization points on these faces gives us that the equations 
that enforce the interface condition are
\begin{align*}
  \begin{cases}
  \mtx{F}^\alpha(I_2^\alpha,I^\alpha) \vct{u}^\alpha + \mtx{G}^\beta(I_1^\beta,I^\beta) \vct{u}^\beta =& \vct{\emptyset}(I_2^\alpha,1), \text{ and} \\
  \mtx{G}^\alpha(I_2^\alpha,I^\alpha) \vct{u}^\alpha + \mtx{F}^\beta(I_1^\beta,I^\beta) \vct{u}^\beta =& \vct{\emptyset}(I_1^\beta,1).
  \end{cases}
\end{align*}

Thus the linear
system that results from the discretization is 
\begin{align}
  \begin{aligned}
    \left(
      \begin{matrix}
        \left[\begin{array}{l}
\mtx{A}_c^\alpha(I_i^\alpha,I^\alpha) \\ 
                \mtx{F}^\alpha(I_1^\alpha,I^\alpha) \\
                \mtx{F}^\alpha(I_2^\alpha,I^\alpha) \\
                \mtx{F}^\alpha(I_3^\alpha,I^\alpha) \\
                \mtx{F}^\alpha(I_4^\alpha,I^\alpha) \\
                \mtx{F}^\alpha(I_5^\alpha,I^\alpha) \\
                \mtx{F}^\alpha(I_6^\alpha,I^\alpha) 
\end{array}\right]
&
\left[\begin{array}{l}
        \mtx{\emptyset}(I_i^\alpha,I^\beta) \\ 
        \mtx{\emptyset}(I_1^\alpha,I^\beta) \\
        \mtx{G}^\beta(I_1^\beta,I^\beta) \\
        \mtx{\emptyset}(I_3^\alpha,I^\beta) \\
        \mtx{\emptyset}(I_4^\alpha,I^\beta) \\
        \mtx{\emptyset}(I_5^\alpha,I^\beta) \\
        \mtx{\emptyset}(I_6^\alpha,I^\beta)
      \end{array}\right]
\\
\left[\begin{array}{l}
        \mtx{\emptyset}(I_i^\beta,I^\alpha) \\ 
        \mtx{G}^\alpha(I_2^\alpha,I^\alpha) \\
        \mtx{\emptyset}(I_2^\beta,I^\alpha)\\
        \mtx{\emptyset}(I_3^\beta,I^\alpha) \\
        \mtx{\emptyset}(I_4^\beta,I^\alpha) \\
        \mtx{\emptyset}(I_5^\beta,I^\alpha) \\
        \mtx{\emptyset}(I_6^\beta,I^\alpha)
      \end{array}\right]
&
 \left[\begin{array}{l}
\mtx{A}_c^\beta(I_i^\beta,I^\beta) \\ 
                \mtx{F}^\beta(I_1^\beta,I^\beta) \\
                \mtx{F}^\beta(I_2^\beta,I^\beta) \\
                \mtx{F}^\beta(I_3^\beta,I^\beta) \\
                \mtx{F}^\beta(I_4^\beta,I^\beta) \\
                \mtx{F}^\beta(I_5^\beta,I^\beta) \\
                \mtx{F}^\beta(I_6^\beta,I^\beta) 
\end{array}\right]
      \end{matrix}\right)
    \left(
      \begin{matrix}
        \left[\begin{array}{l}
                \vct{u}_i^\alpha \\ 
                \vct{u}_1^\alpha \\
                \vct{u}_2^\alpha \\
                \vct{u}_3^\alpha \\
                \vct{u}_4^\alpha \\
                \vct{u}_5^\alpha \\
                \vct{u}_6^\alpha 
\end{array}\right]
                \\
 \left[\begin{array}{l}
                \vct{u}_i^\beta \\ 
                \vct{u}_1^\beta \\
                \vct{u}_2^\beta \\
                \vct{u}_3^\beta \\
                \vct{u}_4^\beta \\
                \vct{u}_5^\beta \\
                \vct{u}_6^\beta 
\end{array}\right]
      \end{matrix}
    \right) =&
    \left(
      \begin{matrix}
        \left[\begin{array}{l}
                \vct{s}_i^\alpha \\ 
                \vct{t}_1^\alpha \\
                \vct{\emptyset}(I_2^\alpha,1) \\
                \vct{t}_3^\alpha \\
                \vct{t}_4^\alpha \\
                \vct{t}_5^\alpha \\
                \vct{t}_6^\alpha 
\end{array}\right]
\\
\left[\begin{array}{l}
        \vct{s}_i^\beta \\ 
        \vct{\emptyset}(I_1^\beta,1) \\
        \vct{t}_2^\beta \\
        \vct{t}_3^\beta \\
        \vct{t}_4^\beta \\
        \vct{t}_5^\beta \\
        \vct{t}_6^\beta 
      \end{array}\right]
      \end{matrix}
    \right)
  \end{aligned} \label{eq:2x2}
\end{align}
where $\mtx{\emptyset}$ denotes the zero matrix of size $n\times n$,
and {\bf $I_1^\beta$ and $I_2^\alpha$, indicate indices of the same
points in space, indexed from $\beta$ and $\alpha$ respectively.}
Vectors $\vct{t}$ contain given boundary conditions on $\partial
\Omega$.  

\section{The global system} \label{sec:sparse}

This section presents the construction and application of the large
sparse linear system that results from the HPS discretization.  The
section begins with a high level view of the global system and then
provides the details of rapidly applying it to vector.

\subsection{A high level view}

The linear system that results from the discretization with $N_\ell$
leaves is
\begin{equation}
  \boldsymbol{A} \boldsymbol{x} = \boldsymbol{b} 
\end{equation}
where $\boldsymbol{A}$ is an $\left(N \times N\right)$ non-symmetric
matrix where $N=n N_\ell$ with complex-valued entries and the right
hand side vector $\boldsymbol{b}$ has entries that correspond to
$s(x)$ in (\ref{eqn:diffhelm}) evaluated at the interior nodes, $t(x)$
in (\ref{eqn:diffhelm}) evaluated at the boundary nodes and zeros for
the inter-leaf boundary nodes.  The zeros in $\vct{b}$ correspond to
the equations that enforce the continuity of impedance boundary data
between leaf boxes.  The matrix $\mtx{A}$ is sparse with the spectral
collocation matrices $\mtx{A}^\tau$ defined in \eqref{eqn:leaf} along
the diagonal and sparse matrices off the diagonal for enforcing the
continuity of impedance between leaf boxes.  The off-diagonal non-zero
matrices enforce the continuity of impedance data between leaf boxes.
This matches the two box linear system in equation (\ref{eq:2x2}).  
 Figure
\ref{fig:GlobalSystemCorner}(b) illustrates the block-sparsity pattern
of the large system and Figure \ref{fig:GlobalSystemCorner}(c)
illustrates a principal block-submatrix of this sparse system for a 
mesh defined in Figure \ref{fig:GlobalSystemCorner}(a).   
\begin{figure}
  \begin{subfigure}{0.5\textwidth}
    \begin{tikzpicture}[scale=1.7]

      \def \dx {0.25}
      \def \dy {0.25}
      \def \dz {0.25}
      \def \opacity {0.8}

      \foreach \z in {0.75,1,...,1.5}
      \foreach \x in {0,0.25,...,0.75}
      \foreach \y in {0,0.25,...,0.75}
      {
        \filldraw[fill=blue!20,fill opacity=\opacity] ({0+0.5*(1-\dy)+\x-0.5*\y},{0+0.5*(1-\dy)-0.5*\y+\z}) -- ({0+0.5+\x-0.5*\y},{0+0.5-0.5*\y+\z}) -- ({0+0.5+\x-0.5*\y},{0+0.5+\dz-0.5*\y+\z}) -- ({0+0.5*(1-\dy)+\x-0.5*\y},{0+\dz+0.5*(1-\dy)-0.5*\y+\z}) -- ({0+0.5*(1-\dy)+\x-0.5*\y},{0+0.5*(1-\dy)-0.5*\y+\z}); 
        \filldraw[fill=blue!20,fill opacity=\opacity] ({0.5+0+\x-0.5*\y},{0.5+0-0.5*\y+\z}) -- ({0.5+\dx+\x-0.5*\y},{0.5+0-0.5*\y+\z}) -- ({0.5+\dx+\x-0.5*\y},{0.5+\dz-0.5*\y+\z})  -- ({0.5+0+\x-0.5*\y},{0.5+\dz-0.5*\y+\z}) -- ({0.5+0+\x-0.5*\y},{0.5+0-0.5*\y+\z});
        \filldraw[fill=blue!20,fill opacity=\opacity] ({0+0.5*(1-\dy)+\x-0.5*\y},{0+0.5*(1-\dy)-0.5*\y+\z}) -- ({0+\dx+0.5*(1-\dy)+\x-0.5*\y},{0+0.5*(1-\dy)-0.5*\y+\z}) -- ({0+\dx+0.5+\x-0.5*\y},{0+0.5-0.5*\y+\z}) -- ({0+0.5+\x-0.5*\y},{0+0.5-0.5*\y+\z}) -- ({0+0.5*(1-\dy)+\x-0.5*\y},{0+0.5*(1-\dy)-0.5*\y+\z}); 

        \filldraw[fill=blue!20,fill opacity=\opacity] ({0+0.5*(1-\dy)+\dx+\x-0.5*\y},{0+0.5*(1-\dy)-0.5*\y+\z}) -- ({0+0.5+\dx+\x-0.5*\y},{0+0.5-0.5*\y+\z}) -- ({0+0.5+\dx+\x-0.5*\y},{0+0.5+\dz-0.5*\y+\z}) -- ({0+0.5*(1-\dy)+\dx+\x-0.5*\y},{0+\dz+0.5*(1-\dy)-0.5*\y+\z}) -- ({0+0.5*(1-\dy)+\dx+\x-0.5*\y},{0+0.5*(1-\dy)-0.5*\y+\z}) ;
        \filldraw[fill=blue!20,fill opacity=\opacity] ({0.5+0-0.5*\dy+\x-0.5*\y},{0.5+0-0.5*\dy-0.5*\y+\z}) -- ({0.5+\dx-0.5*\dy+\x-0.5*\y},{0.5+0-0.5*\dy-0.5*\y+\z}) -- ({0.5+\dx-0.5*\dy+\x-0.5*\y},{0.5+\dz-0.5*\dy-0.5*\y+\z})  -- ({0.5+0-0.5*\dy+\x-0.5*\y},{0.5+\dz-0.5*\dy-0.5*\y+\z}) -- ({0.5+0-0.5*\dy+\x-0.5*\y},{0.5+0-0.5*\dy-0.5*\y+\z}) ;
        \filldraw[fill=blue!20,fill opacity=\opacity] ({0+0.5*(1-\dy)+\x-0.5*\y},{0+\dz+0.5*(1-\dy)-0.5*\y+\z}) -- ({0+\dx+0.5*(1-\dy)+\x-0.5*\y},{0+\dz+0.5*(1-\dy)-0.5*\y+\z}) -- ({0+\dx+0.5+\x-0.5*\y},{0+0.5+\dz-0.5*\y+\z}) -- ({0+0.5+\x-0.5*\y},{0+0.5+\dz-0.5*\y+\z}) -- ({0+0.5*(1-\dy)+\x-0.5*\y},{0+\dz+0.5*(1-\dy)-0.5*\y+\z});
      }

      \draw[-stealth] (1.55,2.125) -- (1.95,2.4);
      \draw[-stealth] (1.55,1.85) -- (1.95,1.85);
      \draw[-stealth] (1.55,1.625) -- (1.95,1.25);
      \draw[-stealth] (1.55,1.325) -- (1.95,0.625);
      
      \pgfmathsetmacro{\w}{0}
      \foreach \z in {0,0.75,...,2.25}
      \foreach \x in {2,2.25,...,2.75}
      \foreach \y in {0,0.25,...,0.75}
      {
        \pgfmathsetmacro{\ww}{int(\w+1)}
        
        \filldraw[fill=blue!20,fill opacity=\opacity] ({0+0.5*(1-\dy)+\x-0.5*\y},{0+0.5*(1-\dy)-0.5*\y+\z}) -- ({0+0.5+\x-0.5*\y},{0+0.5-0.5*\y+\z}) -- ({0+0.5+\x-0.5*\y},{0+0.5+\dz-0.5*\y+\z}) -- ({0+0.5*(1-\dy)+\x-0.5*\y},{0+\dz+0.5*(1-\dy)-0.5*\y+\z}) -- ({0+0.5*(1-\dy)+\x-0.5*\y},{0+0.5*(1-\dy)-0.5*\y+\z}); 
        \filldraw[fill=blue!20,fill opacity=\opacity] ({0.5+0+\x-0.5*\y},{0.5+0-0.5*\y+\z}) -- ({0.5+\dx+\x-0.5*\y},{0.5+0-0.5*\y+\z}) -- ({0.5+\dx+\x-0.5*\y},{0.5+\dz-0.5*\y+\z})  -- ({0.5+0+\x-0.5*\y},{0.5+\dz-0.5*\y+\z}) -- ({0.5+0+\x-0.5*\y},{0.5+0-0.5*\y+\z});
        \filldraw[fill=blue!20,fill opacity=\opacity] ({0+0.5*(1-\dy)+\x-0.5*\y},{0+0.5*(1-\dy)-0.5*\y+\z}) -- ({0+\dx+0.5*(1-\dy)+\x-0.5*\y},{0+0.5*(1-\dy)-0.5*\y+\z}) -- ({0+\dx+0.5+\x-0.5*\y},{0+0.5-0.5*\y+\z}) -- ({0+0.5+\x-0.5*\y},{0+0.5-0.5*\y+\z}) -- ({0+0.5*(1-\dy)+\x-0.5*\y},{0+0.5*(1-\dy)-0.5*\y+\z}); 

        \filldraw[fill=blue!20,fill opacity=\opacity] ({0+0.5*(1-\dy)+\dx+\x-0.5*\y},{0+0.5*(1-\dy)-0.5*\y+\z}) -- ({0+0.5+\dx+\x-0.5*\y},{0+0.5-0.5*\y+\z}) -- ({0+0.5+\dx+\x-0.5*\y},{0+0.5+\dz-0.5*\y+\z}) -- ({0+0.5*(1-\dy)+\dx+\x-0.5*\y},{0+\dz+0.5*(1-\dy)-0.5*\y+\z}) -- ({0+0.5*(1-\dy)+\dx+\x-0.5*\y},{0+0.5*(1-\dy)-0.5*\y+\z}) ;
        \filldraw[fill=blue!20,fill opacity=\opacity] ({0.5+0-0.5*\dy+\x-0.5*\y},{0.5+0-0.5*\dy-0.5*\y+\z}) -- ({0.5+\dx-0.5*\dy+\x-0.5*\y},{0.5+0-0.5*\dy-0.5*\y+\z}) -- ({0.5+\dx-0.5*\dy+\x-0.5*\y},{0.5+\dz-0.5*\dy-0.5*\y+\z})  -- ({0.5+0-0.5*\dy+\x-0.5*\y},{0.5+\dz-0.5*\dy-0.5*\y+\z}) -- ({0.5+0-0.5*\dy+\x-0.5*\y},{0.5+0-0.5*\dy-0.5*\y+\z}) ;
        \filldraw[fill=blue!20,fill opacity=\opacity] ({0+0.5*(1-\dy)+\x-0.5*\y},{0+\dz+0.5*(1-\dy)-0.5*\y+\z}) -- ({0+\dx+0.5*(1-\dy)+\x-0.5*\y},{0+\dz+0.5*(1-\dy)-0.5*\y+\z}) -- ({0+\dx+0.5+\x-0.5*\y},{0+0.5+\dz-0.5*\y+\z}) -- ({0+0.5+\x-0.5*\y},{0+0.5+\dz-0.5*\y+\z}) -- ({0+0.5*(1-\dy)+\x-0.5*\y},{0+\dz+0.5*(1-\dy)-0.5*\y+\z});
        \node[label={[label distance=0mm]:$\scalemath{0.6}{\ww}$}] at ({0+0.5*(1-\dy)+\x-0.5*\y+0.2},{0+\dz+0.5*(1-\dy)-0.5*\y+\z-0.125}){};
        \global\let\w=\ww+1
      }
\end{tikzpicture}
  \caption{}
\end{subfigure}
\begin{subfigure}{0.49\textwidth}
  \centering
  \[\begin{pmatrix}
      \includegraphics[width=6cm]{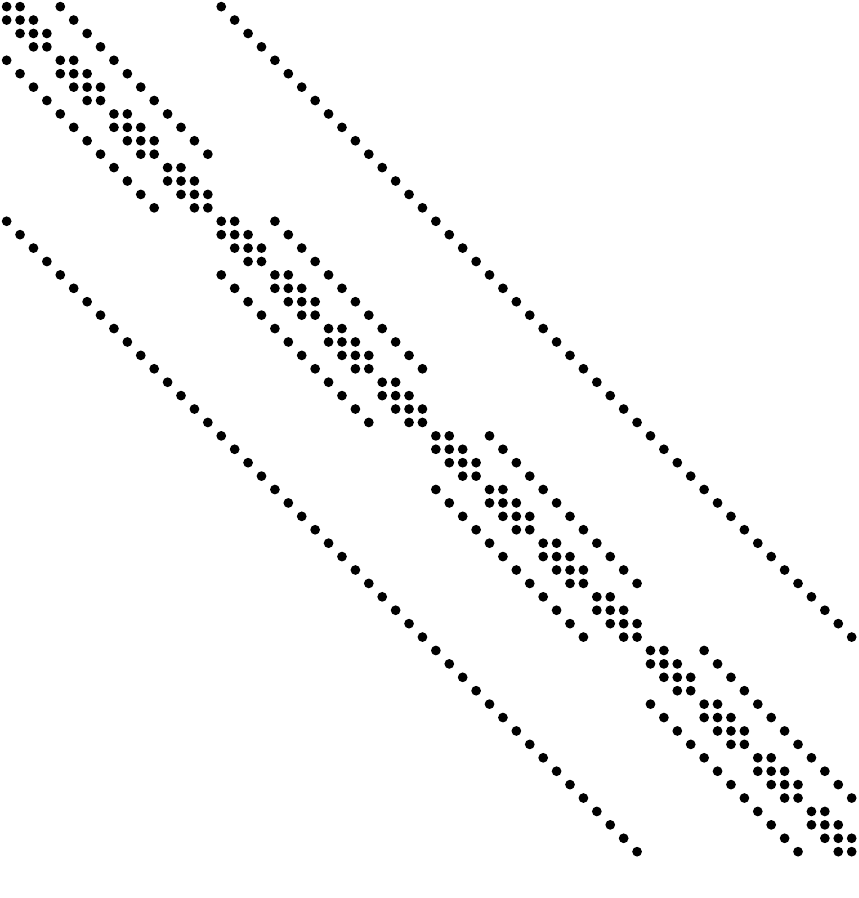}
    \end{pmatrix}\]
  \caption{}
\end{subfigure}
\begin{subfigure}{\textwidth}
\centering    \begin{equation*}
      \scalemath{0.58}{
        \left(\begin{array}{cccccccccccccccccc}
                \boldsymbol{A}^{\tau=1}       & \boldsymbol{G}^{\tau=2}_{-x} &                               &                               & \boldsymbol{G}^{\tau=5}_{-y} &                               &                               &                               &                               &                                &                                &                                &                                &                                &                                &                                & \boldsymbol{G}^{\tau=17}_{-z} \\
                \boldsymbol{G}^{\tau=1}_{x}  &  \boldsymbol{A}^{\tau=2}      & \boldsymbol{G}^{\tau=3}_{-x} &                               &                               & \boldsymbol{G}^{\tau=6}_{-y} &                               &                               &                               &                                &                                &                                &                                &                                &                                &                                &                                \\
                                              & \boldsymbol{G}^{\tau=2}_{x}  &  \boldsymbol{A}^{\tau=3}      & \boldsymbol{G}^{\tau=4}_{-x} &                               &                               & \boldsymbol{G}^{\tau=7}_{-y} &                               &                               &                                &                                &                                &                                &                                &                                &                                &                                \\
                                              &                               & \boldsymbol{G}^{\tau=3}_{x}  &  \boldsymbol{A}^{\tau=4}      &                               &                               &                               & \boldsymbol{G}^{\tau=8}_{-y} &                               &                                &                                &                                &                                &                                &                                &                                &                                \\
                \boldsymbol{G}^{\tau=1}_{y}  &                               &                               &                               &  \boldsymbol{A}^{\tau=5}      & \boldsymbol{G}^{\tau=6}_{-x} &                               &                               & \boldsymbol{G}^{\tau=9}_{-y} &                                &                                &                                &                                &                                &                                &                                &                                \\
                                              & \boldsymbol{G}^{\tau=2}_{y}  &                               &                               & \boldsymbol{G}^{\tau=5}_{x}  &  \boldsymbol{A}^{\tau=6}      & \boldsymbol{G}^{\tau=7}_{-x} &                               &                               & \boldsymbol{G}^{\tau=10}_{-y} &                                &                                &                                &                                &                                &                                &                                \\
                                              &                               & \boldsymbol{G}^{\tau=3}_{y}  &                               &                               & \boldsymbol{G}^{\tau=6}_{x}  &  \boldsymbol{A}^{\tau=7}      & \boldsymbol{G}^{\tau=8}_{-x} &                               &                                & \boldsymbol{G}^{\tau=11}_{-y} &                                &                                &                                &                                &                                &                                \\
                                              &                               &                               & \boldsymbol{G}^{\tau=4}_{y}  &                               &                               & \boldsymbol{G}^{\tau=7}_{x}  &  \boldsymbol{A}^{\tau=8}      &                               &                                &                                & \boldsymbol{G}^{\tau=12}_{-y} &                                &                                &                                &                                &                                \\
                                              &                               &                               &                               & \boldsymbol{G}^{\tau=5}_{y}  &                               &                               &                               &  \boldsymbol{A}^{\tau=9}      & \boldsymbol{G}^{\tau=10}_{-x} &                                &                                & \boldsymbol{G}^{\tau=13}_{-y} &                                &                                &                                &                                \\
                                              &                               &                               &                               &                               & \boldsymbol{G}^{\tau=6}_{y}  &                               &                               & \boldsymbol{G}^{\tau=9}_{x}  &  \boldsymbol{A}^{\tau=10}      & \boldsymbol{G}^{\tau=11}_{-x} &                                &                                & \boldsymbol{G}^{\tau=14}_{-y} &                                &                                &                                \\
                                              &                               &                               &                               &                               &                               & \boldsymbol{G}^{\tau=7}_{y}  &                               &                               & \boldsymbol{G}^{\tau=10}_{x}  &  \boldsymbol{A}^{\tau=11}      & \boldsymbol{G}^{\tau=12}_{-x} &                                &                                & \boldsymbol{G}^{\tau=15}_{-y} &                                &                                \\
                                              &                               &                               &                               &                               &                               &                               & \boldsymbol{G}^{\tau=8}_{y}  &                               &                                & \boldsymbol{G}^{\tau=11}_{x}  &  \boldsymbol{A}^{\tau=12}      &                                &                                &                                & \boldsymbol{G}^{\tau=16}_{-y} &                                \\
                                              &                               &                               &                               &                               &                               &                               &                               & \boldsymbol{G}^{\tau=9}_{y}  &                                &                                &                                &  \boldsymbol{A}^{\tau=13}      & \boldsymbol{G}^{\tau=14}_{-x} &                                &                                &                                \\
                                              &                               &                               &                               &                               &                               &                               &                               &                               & \boldsymbol{G}^{\tau=10}_{y}  &                                &                                & \boldsymbol{G}^{\tau=13}_{x}  &  \boldsymbol{A}^{\tau=14}      & \boldsymbol{G}^{\tau=15}_{-x} &                                &                                \\
                                              &                               &                               &                               &                               &                               &                               &                               &                               &                                & \boldsymbol{G}^{\tau=11}_{y}  &                                &                                & \boldsymbol{G}^{\tau=14}_{x}  &  \boldsymbol{A}^{\tau=15}      & \boldsymbol{G}^{\tau=16}_{-x} &                                \\
                                              &                               &                               &                               &                               &                               &                               &                               &                               &                                &                                & \boldsymbol{G}^{\tau=12}_{y}  &                                &                                & \boldsymbol{G}^{\tau=15}_{x}  &  \boldsymbol{A}^{\tau=16}      &                                \\
                \boldsymbol{G}^{\tau=1}_{z}  &                               &                               &                               &                               &                               &                               &                               &                               &                                &                                &                                &                                &                                &                                &                                & \boldsymbol{A}^{\tau=17}       \\
              \end{array}\right)}
        \end{equation*} \caption{}
\end{subfigure}
\caption{
  (a) Numbering of leaves in a uniform mesh with $4$ leaf boxes in 
  each direction.\\
  (b) Illustration of the block sparsity structure for a $4\times
  4\times 4$ leaves uniform mesh, when using the numerotation in (a).
  Each dot in this figure is a block matrix, diagonal blocks are leaf
  matrices $\boldsymbol{A}^{\tau}$, off-diagonal blocks are outgoing
  impedance matrices $\boldsymbol{G}^\tau_{-x}$,
  $\boldsymbol{G}^\tau_{x}$, $\boldsymbol{G}^\tau_{-y}$,
  $\boldsymbol{G}^\tau_{y}$, $\boldsymbol{G}^\tau_{-z}$ or
  $\boldsymbol{G}^\tau_{z}$, and white space are zero blocks. \\
  (c) Zoom on the $17\times 17$-block upper-left corner of the
  block matrix depicted in (b).}
\label{fig:GlobalSystemCorner}
\end{figure}

\clearpage
In this illustration of the full system, the $\mtx{G}^\tau$ with a
subscript matrices are the matrices which help enforce the continuity
of the impedance data between leaf boxes.  They are sparse matrices
defined as follows:
\begin{align}
  \label{eqn:outgoingimpedance}
  \begin{aligned}
    \mtx{G}^\tau_{-x} :=& 
    \left(\begin{matrix}
        \boldsymbol{\emptyset}\left(I^{\tau'}_i,I^\tau\right) \\
        \boldsymbol{\emptyset}\left(I^{\tau'}_1,I^\tau\right) \\
        \mtx{G}^\tau\left(I^{\tau}_1,I^\tau\right) \\
        \boldsymbol{\emptyset}\left(I^{\tau'}_3,I^\tau\right) \\
        \boldsymbol{\emptyset}\left(I^{\tau'}_4,I^\tau\right) \\
        \boldsymbol{\emptyset}\left(I^{\tau'}_5,I^\tau\right) \\
        \boldsymbol{\emptyset}\left(I^{\tau'}_6,I^\tau\right)
      \end{matrix}\right),
    &
    \mtx{G}^\tau_{x} :=&
    \left(\begin{matrix}
        \boldsymbol{\emptyset}\left(I^{\tau'}_i,I^\tau\right) \\
        \mtx{G}^\tau\left(I^{\tau}_2,I^\tau\right) \\
        \boldsymbol{\emptyset}\left(I^{\tau'}_2,I^\tau\right) \\
        \boldsymbol{\emptyset}\left(I^{\tau'}_3,I^\tau\right) \\
        \boldsymbol{\emptyset}\left(I^{\tau'}_4,I^\tau\right) \\
        \boldsymbol{\emptyset}\left(I^{\tau'}_5,I^\tau\right) \\
        \boldsymbol{\emptyset}\left(I^{\tau'}_6,I^\tau\right)
      \end{matrix}\right),
    &
    \mtx{G}^\tau_{-y} :=& 
    \left(\begin{matrix}
        \boldsymbol{\emptyset}\left(I^{\tau'}_i,I^\tau\right) \\
        \boldsymbol{\emptyset}\left(I^{\tau'}_1,I^\tau\right) \\
        \boldsymbol{\emptyset}\left(I^{\tau'}_2,I^\tau\right) \\
        \boldsymbol{\emptyset}\left(I^{\tau'}_3,I^\tau\right) \\
        \mtx{G}^\tau\left(I^{\tau}_3,I^\tau\right) \\
        \boldsymbol{\emptyset}\left(I^{\tau'}_5,I^\tau\right) \\
        \boldsymbol{\emptyset}\left(I^{\tau'}_6,I^\tau\right)
      \end{matrix}\right),
    \\
    \mtx{G}^\tau_{y} :=&
    \left(\begin{matrix}
        \boldsymbol{\emptyset}\left(I^{\tau'}_i,I^\tau\right) \\
        \boldsymbol{\emptyset}\left(I^{\tau'}_1,I^\tau\right) \\
        \boldsymbol{\emptyset}\left(I^{\tau'}_2,I^\tau\right) \\
        \mtx{G}^\tau\left(I^{\tau}_4,I^\tau\right) \\
        \boldsymbol{\emptyset}\left(I^{\tau'}_4,I^\tau\right) \\
        \boldsymbol{\emptyset}\left(I^{\tau'}_5,I^\tau\right) \\
        \boldsymbol{\emptyset}\left(I^{\tau'}_6,I^\tau\right)
      \end{matrix}\right),
    &
    \mtx{G}^\tau_{-z} :=& 
    \left(\begin{matrix}
        \boldsymbol{\emptyset}\left(I^{\tau'}_i,I^\tau\right) \\
        \boldsymbol{\emptyset}\left(I^{\tau'}_1,I^\tau\right) \\
        \boldsymbol{\emptyset}\left(I^{\tau'}_2,I^\tau\right) \\
        \boldsymbol{\emptyset}\left(I^{\tau'}_3,I^\tau\right) \\
        \boldsymbol{\emptyset}\left(I^{\tau'}_4,I^\tau\right) \\
        \boldsymbol{\emptyset}\left(I^{\tau'}_5,I^\tau\right) \\
        \mtx{G}^\tau\left(I^{\tau}_5,I^\tau\right) \\
      \end{matrix}\right),
    &
    \mtx{G}^\tau_{z} :=&
    \left(\begin{matrix}
        \boldsymbol{\emptyset}\left(I^{\tau'}_i,I^\tau\right) \\
        \boldsymbol{\emptyset}\left(I^{\tau'}_1,I^\tau\right) \\
        \boldsymbol{\emptyset}\left(I^{\tau'}_2,I^\tau\right) \\
        \boldsymbol{\emptyset}\left(I^{\tau'}_3,I^\tau\right) \\
        \boldsymbol{\emptyset}\left(I^{\tau'}_4,I^\tau\right) \\
        \mtx{G}^\tau\left(I^{\tau}_6,I^\tau\right) \\
        \boldsymbol{\emptyset}\left(I^{\tau'}_6,I^\tau\right) \\
      \end{matrix}\right).
  \end{aligned}
\end{align}
where $\tau'$ indicates the corresponding neighboring leaf to $\tau$,
$\mtx{\emptyset}$ denotes the zero matrix of size $n\times n$ and the
non-zero matrices are submatrices of $\mtx{G}^\tau$ as defined in
equation \eqref{eqn:G}.  

Refering back the two leaf example in the previous section, the off-diagonal
blocks are $\mtx{G}_{-x}^\beta$ and $\mtx{G}_{x}^\alpha$ in the $(1,2)$ and $(2,1)$
positions respectively.  

For mesh involving purely interior leaf boxes, all of the
$\mtx{G}^\tau$ with a subscript matrices are necessary.  For example,
consider the box $\tau = 43$ in Figure
\ref{fig:GlobalSystemCorner}(a).  Then the matrices
$\mtx{G}^{\tau=44}_{-x}$, $\mtx{G}^{\tau=42}_{x}$, $\mtx{G}^{\tau=47}_{-y}$,
$\mtx{G}^{\tau=39}_{y}$, $\mtx{G}^{\tau=59}_{-z}$ and $\mtx{G}^{\tau=27}_{z}$ are
needed to enforce the continuity of the fluxes through all
\emph{six} of the interfaces.

 \begin{remark} 
To keep computations local to each leaf, we allow each leaf box to
have its own set of boundary nodes. This means that all interfaces of
leaf boxes that are not on the boundary of $\Omega$ have twice as many
unknowns as there are discretization points on that face.
\end{remark}

\subsection{The application of the linear system}
The solution technique presented in this paper utilizes a
\texttt{GMRES} iterative solver.  In order for this solver to be
efficient, the matrix $\mtx{A}$ be able to applied to a vector
rapidly.  Since all of the submatrices of $\mtx{A}$ are defined at the
box (element) level, it is easy to apply $\mtx{A}$ in a
\emph{matrix-free} manner \cite{Knyazev2001, KronbichlerKormann2019}.
Specifically, for any block row corresponding to element $\tau$, there
are two matrix types that need to be applied: the self interaction
matrix $\boldsymbol{A}^{\tau}$ corresponding to the discretization of
the problem on the element as presented in section
\ref{sec:leafdiscretization} and $\mtx{G}^\tau$ with subscript matrices
which enforce the continuity of impedance boundary data.  Fortunately both
of these matrices are sparse.  Figure \ref{fig:sparse_loc} illustrates the sparsity pattern of the
matrix $\mtx{A}^\tau$ and $\mtx{G}^\tau_{-x}$ when $n_c = 10$.  Since
the $\mtx{G}^\tau_x$, etc. matrices are very sparse (e.g. Figure
\ref{fig:sparse_loc}(b)), their application to a vector is
straightforward.

\begin{figure}
  \centering
  \begin{subfigure}{0.45\textwidth}
    \centering
    \[\begin{pmatrix}
      \includegraphics[width=0.9\textwidth]{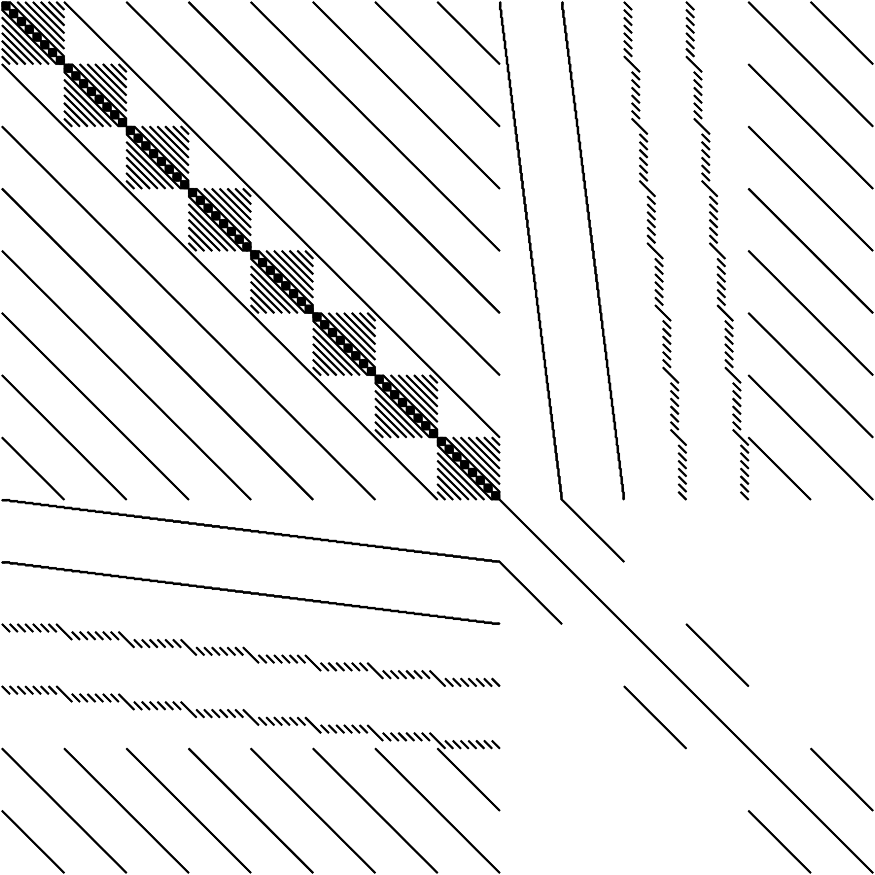}
    \end{pmatrix}\]
    \caption{Sparsity of a leaf block $\mtx{A}^\tau$}
  \end{subfigure}
  {\mbox{\hspace{0.2cm}}}
  \begin{subfigure}{0.45\textwidth} 
    \[\begin{pmatrix}
        \includegraphics[width=0.9\textwidth]{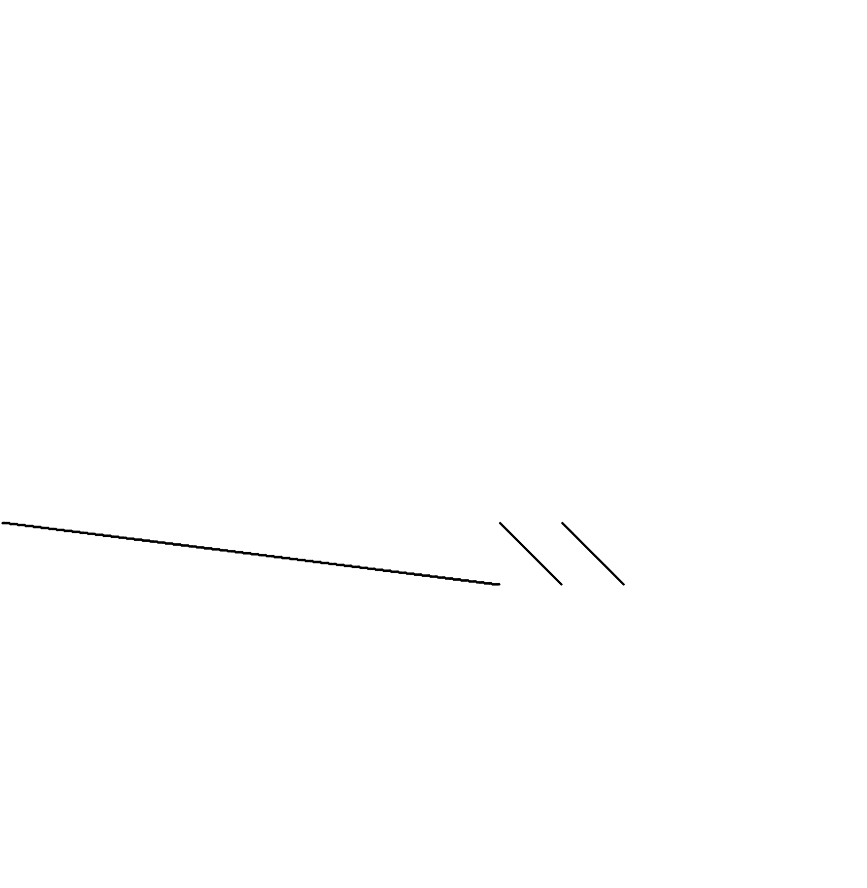}
    \end{pmatrix}\]
    \caption{Sparsity of an impedance block $\mtx{G}^\tau_{x}$}
  \end{subfigure}
  \caption{Illustration of sparsity pattern of (a) a leaf block
    $\mtx{A}^\tau$ and (b) an outgoing impedance block
    $\mtx{G}^\tau_{x}$ when the discretization technique is applied to
    leaf boxes with $n_c = 10$. In these figures black dots represent
    non-zero elements of the matrix, and empty white space represent zero
    elements.
    \label{fig:sparse_loc}}
\end{figure}

The matrix $\mtx{A}^\tau$ is the largest and most dense matrix in a
block row of $\mtx{A}$.  The most dense subblock of $\mtx{A}^\tau$ is
the principal $n_i \times n_i$ submatrix $\mtx{A}^\tau_{ii}$ from
(\ref{eqn:leaf}).  Recall that the submatrix $\mtx{A}^\tau_{ii}$
corresponds to the spectral approximation of the differential operator
on the interior nodes.  Each of the derivative operators can be
written as a collection of Kronecker products involving identity
matrices and the one dimensional second derivative operator.  Thus
$\mtx{A}_{ii}^\tau$ can be expressed almost exclusively in terms of
Kronecker products.  Specifically,
\begin{equation}\label{eqn:leafmat}
  \boldsymbol{A}_{ii}^\tau =
  - \boldsymbol{I}_{(n_c-2)} \otimes \boldsymbol{I}_{(n_c-2)} \otimes \boldsymbol{L}_1
  - \boldsymbol{I}_{(n_c-2)} \otimes \boldsymbol{L}_1 \otimes \boldsymbol{I}_{(n_c-2)}
  - \boldsymbol{L}_1 \otimes \boldsymbol{I}_{(n_c-2)} \otimes \boldsymbol{I}_{(n_c-2)}
  - \boldsymbol{C}_{ii}^\tau
\end{equation}
where $\mtx{I}_{(n_c-2)}$ denotes the $(n_c-2)\times (n_c-2)$ identity
matrix, $\boldsymbol{L}_1$ denotes the $(n_c-2) \times (n_c-2)$
submatrix of the Chebyshev differentiation matrix used for
approximating the second derivative of a one dimensional function, and
$\boldsymbol{C}_{ii}^\tau$ denotes the operator $\boldsymbol{C}^\tau$
on the interior nodes.  Recall that $\mtx{C}^\tau$ is a diagonal
matrix.  This structure allows for $\mtx{A}_{ii}^\tau$ to be applied
rapidly via the technique in Algorithm 1.  The algorithm makes use of
the fast application of tensor products presented in Lemma
\ref{lem:tensor} \cite{Roth1934}.
\begin{lemma} \label{lem:tensor}
  Let $\boldsymbol{x} = \text{vec}(\boldsymbol{X})$ denote the
  \emph{vectorization} of the $m\times m$ matrix $\boldsymbol{X}$ formed
  by stacking the columns of $\boldsymbol{X}$ into a single column
  vector $\boldsymbol{x}$. Likewise, let $\boldsymbol{y}$ denote the
  vectorization of the $m\times m$ matrix $\boldsymbol{Y}$.  Let
  $\mtx{M}$ and $\mtx{N}$ denote matrices of size $m \times m$.  The
  Kronecker product matrix vector multiplication
  \[\boldsymbol{y} = (\boldsymbol{M} \otimes \boldsymbol{N})
    \boldsymbol{x}\]
  can be evaluated by creating the vectorization of the following
  \[\boldsymbol{Y} = \boldsymbol{N} \boldsymbol{X}
    \boldsymbol{M}^\intercal.\]
  In other words, $\boldsymbol{y} = \text{vec}\left(\boldsymbol{N}
    \boldsymbol{X} \boldsymbol{M}^\intercal\right)$.
\end{lemma}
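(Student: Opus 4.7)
The proof is a direct computation from the definitions of the Kronecker product and the vectorization operator, so I do not expect any conceptual obstacle; the plan is simply to organize the bookkeeping cleanly.

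First I would unpack the definition of $\text{vec}(\boldsymbol{X})$ by letting $\boldsymbol{x}_1,\ldots,\boldsymbol{x}_m$ denote the columns of $\boldsymbol{X}$, so that $\text{vec}(\boldsymbol{X})$ is the stacked vector whose $k$-th block of $m$ consecutive entries is $\boldsymbol{x}_k$. Next I would unpack $\boldsymbol{M}\otimes\boldsymbol{N}$ in its standard block form: it is the $m^2\times m^2$ matrix whose $(k,j)$ block of size $m\times m$ is $M_{kj}\boldsymbol{N}$.

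The central step would then be to compute the $k$-th block of the product $(\boldsymbol{M}\otimes\boldsymbol{N})\text{vec}(\boldsymbol{X})$. Using the block forms above, this block equals
\begin{equation*}
\sum_{j=1}^{m} M_{kj}\boldsymbol{N}\boldsymbol{x}_j \;=\; \boldsymbol{N}\!\left(\sum_{j=1}^{m} M_{kj}\boldsymbol{x}_j\right).
\end{equation*}
I would then recognize the inner sum as a column of $\boldsymbol{X}\boldsymbol{M}^\intercal$: since the $k$-th column of $\boldsymbol{X}\boldsymbol{M}^\intercal$ is $\sum_{j} \boldsymbol{x}_j (M^\intercal)_{jk} = \sum_j M_{kj}\boldsymbol{x}_j$, the $k$-th block of $(\boldsymbol{M}\otimes\boldsymbol{N})\text{vec}(\boldsymbol{X})$ is precisely $\boldsymbol{N}$ times the $k$-th column of $\boldsymbol{X}\boldsymbol{M}^\intercal$, i.e., the $k$-th column of $\boldsymbol{N}\boldsymbol{X}\boldsymbol{M}^\intercal$.

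Finally I would assemble these per-column identifications: stacking them for $k=1,\ldots,m$ shows that $(\boldsymbol{M}\otimes\boldsymbol{N})\text{vec}(\boldsymbol{X}) = \text{vec}(\boldsymbol{N}\boldsymbol{X}\boldsymbol{M}^\intercal)$, which is exactly the claim with $\boldsymbol{Y}=\boldsymbol{N}\boldsymbol{X}\boldsymbol{M}^\intercal$. The only subtlety worth flagging in the write-up is the transpose on $\boldsymbol{M}$, which arises because column $k$ of $\boldsymbol{X}\boldsymbol{M}^\intercal$ uses the $k$-th row of $\boldsymbol{M}$; keeping the row/column indices straight is the only place a careless reader could slip.
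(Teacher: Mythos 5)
Your proof is correct, and it is more self-contained than what the paper does: the paper does not prove the lemma at all, deferring entirely to the 1934 paper of Roth via the one-line proof ``See \cite{Roth1934}.'' Your argument is the standard direct verification --- expand $\boldsymbol{M}\otimes\boldsymbol{N}$ in block form, observe that the $k$-th block of $(\boldsymbol{M}\otimes\boldsymbol{N})\,\text{vec}(\boldsymbol{X})$ is $\boldsymbol{N}\sum_j M_{kj}\boldsymbol{x}_j$, and identify $\sum_j M_{kj}\boldsymbol{x}_j$ as the $k$-th column of $\boldsymbol{X}\boldsymbol{M}^\intercal$ --- and the bookkeeping is carried out accurately, including the one genuinely delicate point (the transpose on $\boldsymbol{M}$ arising because column $k$ of $\boldsymbol{X}\boldsymbol{M}^\intercal$ picks out row $k$ of $\boldsymbol{M}$). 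What the citation buys the paper is brevity and attribution of a classical identity; what your computation buys is a complete, elementary proof readable without consulting the reference, and it makes visible exactly why the identity underlies the fast matrix-free application in Algorithms~1 and~3: the right-hand side $\boldsymbol{N}\boldsymbol{X}\boldsymbol{M}^\intercal$ costs $O(m^3)$ versus $O(m^4)$ for forming and applying the Kronecker product. One cosmetic remark: your argument nowhere uses that $\boldsymbol{M}$, $\boldsymbol{N}$, $\boldsymbol{X}$ are square or of equal size --- the same blockwise computation proves the general rectangular identity $\text{vec}(\boldsymbol{N}\boldsymbol{X}\boldsymbol{M}^\intercal)=(\boldsymbol{M}\otimes\boldsymbol{N})\,\text{vec}(\boldsymbol{X})$ whenever the dimensions are compatible, so you could state it in that generality at no extra cost.
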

\begin{proof}
  See \cite{Roth1934}.
\end{proof}
Effectively, this lemma allows for
the tensor products to be reduced to the cost of applying the one
dimensional derivative matrix.  Algorithm 2 presents the efficient
technique for applying $\mtx{A}$ to a vector.
\begin{FloatingAlgorithm}
\noindent\fbox{
\begin{minipage}{\textwidth}
\begin{center}
\textsc{Algorithm 1} (Fast application of $\boldsymbol{A}^\tau_{ii}$)
\end{center}
Let $\boldsymbol{v}$ be a vector of size $n_i$, the algorithm
calculates $\boldsymbol{w} = \boldsymbol{A}^\tau_{ii} \boldsymbol{v}$.
Let $n_1:=n_c-2$.
\\
The function reshape$(\cdot,(p,q))$ is the classical reshape by
columns function, as found in \texttt{MATLAB} and \texttt{FORTRAN}
implementations.\\
\rule{\textwidth}{0.5pt}
\begin{tabbing}
\mbox{}\hspace{7mm} \= \mbox{}\hspace{6mm} \= \mbox{}\hspace{6mm} \=
\mbox{}\hspace{6mm} \= \mbox{}\hspace{6mm} \= \kill
(1) \> $\boldsymbol{M}_1 =
\text{reshape}\left(\boldsymbol{v},\left(n_1^2,n_1\right)\right))$ \\
(2) \> $\boldsymbol{w}_1 =
\text{vec}\left(\boldsymbol{M}_1
  \boldsymbol{L}_1^\intercal\right)$ \\
(3) \> \textbf{for} $j = 1, \dots, n_1$ \\
(4) \>\> $\boldsymbol{M}_2(:,j) =
\text{vec}\left(\text{reshape}(\boldsymbol{M}_1(:,j),(n_1,n_1))
  \boldsymbol{L}_1^\intercal\right)$ \\
(5) \> \textbf{end do} \\
(6) \> $\boldsymbol{w}_2 = \text{vec}\left(\boldsymbol{M}_2\right)$ \\
(7) \> \textbf{for} $j = 1, \dots, n_1$ \\
(8) \>\> $\boldsymbol{M}_2(:,j) = \text{vec}\left(\boldsymbol{L}_1
  \text{reshape}\left(\boldsymbol{M}_1(:,j),\left(n_1,n_1\right)\right)\right)$
\\
(9) \> \textbf{end for} \\
(10)\> $\boldsymbol{w}_3 = \text{vec}\left(\boldsymbol{M}_2\right)$ \\
(11)\> $\boldsymbol{w} = -\boldsymbol{w}_1 - \boldsymbol{w}_2 -
\boldsymbol{w}_3 + \boldsymbol{C}^\tau_{ii} \boldsymbol{v}$
\end{tabbing}
\end{minipage}}
\end{FloatingAlgorithm}

\begin{FloatingAlgorithm}
\noindent\fbox{
\begin{minipage}{\textwidth}
\begin{center}
\textsc{Algorithm 2} $\left(\text{Application of the forward operator }
\mtx{A}\right)$
\end{center}
Let \[\boldsymbol{v} = \left(\begin{smallmatrix}
\boldsymbol{v}^{\tau=1} \\ \dots \\
\boldsymbol{v}^{\tau=N_\ell}\end{smallmatrix} \right)\] be a vector of
size $N$, where \[\boldsymbol{v}^\tau =
\left(\begin{smallmatrix} \boldsymbol{v}^\tau_i \\
\boldsymbol{v}^\tau_b \end{smallmatrix}\right)\] is the sub-vector
of size $n=n_i+n_b$ associated to leaf $\tau$, where
$\boldsymbol{v}^\tau_i$ is of size $n_i$ and $\boldsymbol{v}^\tau_b$
is of size $n_b$.  The vectors $\boldsymbol{w}$ and
$\boldsymbol{w}^\tau$ are defined in the same manner.
The algorithm calculates $\boldsymbol{w} =
\mtx{A} \boldsymbol{v}$.\\
\rule{\textwidth}{0.5pt}
\begin{tabbing}
\mbox{}\hspace{7mm} \= \mbox{}\hspace{6mm} \= \mbox{}\hspace{6mm} \=
\mbox{}\hspace{6mm} \= \mbox{}\hspace{6mm} \= \kill
(1) \> \textbf{for} $\tau = 1, \dots, N_\ell$ \\
(2) \> \> calculate $\vct{w}^\tau_i = \mtx{A}_{ii}^\tau \vct{v}^\tau_i
+ \mtx{A}^\tau(I_i,I_b) \vct{v}^\tau_b$ using \eqref{eqn:leafmat},
Lemma \ref{lem:tensor} and \\
\> \> \> sparse $\mtx{A}^\tau(I_i,I_b)$, \\
(3) \> \> calculate $\boldsymbol{w}_b = \mtx{F}(I_b,I_i)
\vct{v}^\tau_i + \mtx{F}(I_b,I_b) \vct{v}^\tau_b$ using sparse
$\mtx{F}(I_b,I_i)$ \\
\> \> \> and $\mtx{F}(I_b,I_i)$,\\
(4) \> \> \textbf{for} $j = -x,x,-y,y,-z,-z$ \textbf{not on } $\partial\Omega$ \\
(5) \> \> \> set $\vct{w}^\tau = \vct{w}^\tau + \mtx{G}^\tau_j
\vct{w}^\tau$, using definitions \eqref{eqn:outgoingimpedance}\\
(6) \> \> \textbf{end for} \\
(7) \> \textbf{end for}
\end{tabbing}
\end{minipage}}
\end{FloatingAlgorithm}

\section{The Preconditioner} \label{sec:pre}
While the sparse linear system that arises from the HPS discretization
can be applied rapidly, its condition number can be quite large.  In
fact, the condition number of the linear system is highly related to
the condition number of the leaf (or element) discretizations. Because
of this dependence, we chose to build a block Jacobi preconditioner
that tackles the condition number of the leaf discretizations.  This
means that the proposed solution technique is to use an iterative
solver to find $\vct{x}$ such that
\begin{equation}
  \mtx{J}^{-1} \mtx{A} \vct{x} = \mtx{J}^{-1} \vct{b}
  \label{eqn:globalsystem}
\end{equation}
where $\mtx{J}^{-1}$ is the block Jacobi preconditioner.  In order for this
to be a viable solution technique, the matrix $\mtx{J}^{-1}$ must be
efficient to construct and apply to a vector.  The block Jacobi preconditioner
is based on using an efficient solver for a homogenized Helmholtz problem 
on each leaf.   

To explain how the preconditioner works, first consider the task of
solving a problem on on leaf.  Recall that the discretized problem on
a leaf takes the form
\begin{equation}\mtx{A}^\tau \vct{u} =
\mtwo{\mtx{A}_{ii}^\tau}{\mtx{A}_{ib}^\tau}{\mtx{F}^\tau_{bi}}{\mtx{F}^\tau_{bb}}
\vtwo{\vct{u}^\tau_i}{\vct{u}^\tau_b} =
\vtwo{\vct{s}^\tau}{\vct{\hat{f}}^\tau}.
\label{eq:leaf}\end{equation}
The solution of (\ref{eq:leaf}) can be expressed via a $2\times 2$
block solve as
\begin{equation}\label{eqn:block}
\begin{aligned}
 \vct{u}_b^\tau & = \mtx{S}^{\tau,-1} \left(\vct{\hat{f}}^\tau -
   \mtx{F}_{bi}^\tau \mtx{A}_{ii}^{\tau,-1} \vct{s}^\tau\right)\\
 \vct{u}_i^\tau & = \mtx{A}_{ii}^{\tau,-1} \left(\vct{s}^\tau -
   \mtx{A}_{ib}^\tau \vct{u}_b^\tau\right)
\end{aligned}.
\end{equation}
where 
\begin{equation}
  \mtx{S}^\tau = \mtx{F}^\tau_{bb} -
  \mtx{F}^\tau_{bi}\mtx{A}_{ii}^{\tau,-1} \mtx{A}^\tau_{ib}
\label{eqn:schur} 
\end{equation}
denotes the Schur complement matrix for leaf $\tau$.  Note that this
requires the inverse of two matrices $\mtx{A}_{ii}^{\tau}$ and
$\mtx{S}^\tau$.

While the matrix $\mtx{A}_{ii}^{\tau}$ is sparse (the principal block
in Figure \ref{fig:sparse_loc}), it is large for high order
discretizations and its inverse is dense.  The Schur complement
$\mtx{S}^\tau$ is dense but its size is on the order of the number of
points on the boundary; i.e. it is much smaller than
$\mtx{A}_{ii}^\tau$.  For example, when $n_c = 16$, the Schur
complement is a matrix of size $1176\times 1176$ while
$\mtx{A}_{ii}^\tau$ is a matrix of size $2744\times
2744$.  Thus $\mtx{S}^\tau$ can be inverted efficiently with standard
linear algebra software such as \texttt{LAPACK}.

With this in mind, we decided to make a preconditioner that
"approximates" $\mtx{A}_{ii}^{\tau,-1}$ but is inexpensive to
construct and apply.  
Let $\tilde{\mtx{A}}^{\tau,-1}_{ii}$ denote the
approximate inverse of $\mtx{A}_{ii}^{\tau}$.  Then the preconditioned
leaf solver is

\begin{equation}\label{eqn:block_pc}
  \begin{aligned}
    \tilde{\vct{u}}_b^\tau & = \tilde{\mtx{S}}^{\tau,-1} \left(\vct{\hat{f}}^\tau -
      \mtx{F}_{bi}^\tau \tilde{\mtx{A}}_{ii}^{\tau,-1} \vct{s}^\tau\right)\\
    \tilde{\vct{u}}_i^\tau & = \tilde{\mtx{A}}_{ii}^{\tau,-1} \left(\vct{s}^\tau -
      \mtx{A}_{ib}^\tau \vct{u}_b^\tau\right)
  \end{aligned}
\end{equation}
where  
\begin{equation}
  \tilde{\mtx{S}}^\tau = \mtx{F}^\tau_{bb} -
  \mtx{F}^\tau_{bi}\tilde{\mtx{A}}_{ii}^{\tau,-1} \mtx{A}^\tau_{ib}
\label{eqn:schur_pc} 
\end{equation}
and $\tilde{\vct{u}}_b^\tau$ and $\tilde{\vct{u}}_i^\tau$ denote the
solutions to this "approximate" problem.  As with the Schur complement
for the original system, $\tilde{\mtx{S}}^{\tau}$ can be inverted
rapidly using standard linear algebra software.

 The preconditioner for the full system is simply applying
(\ref{eqn:block_pc}) as a block diagonal matrix.  Algorithm 4 details
how to efficiently apply the collection of local preconditioners as
the block-Jacobi preconditioner.

Section \ref{sec:hom} provides the details about the matrix $\tilde{\mtx{A}}_{ii}^\tau$
 and the efficient application of its inverse.  Section \ref{sec:homlocal} illustrates
 the performance of the preconditioner when solving a boundary value problem 
 with one leaf box.

\subsection{The matrix $\widetilde{\mtx{A}}_{ii}^\tau$ and its inverse}
\label{sec:hom}
When the local discretization is high order, constructing the inverse
of the matrix $\mtx{A}_{ii}^\tau$ dominates the cost of constructing
the local solution in (\ref{eqn:block}).  This section presents the
choice of the matrix $\tilde{\mtx{A}}^\tau_{ii}$ which makes
(\ref{eqn:block_pc}) an effective local preconditioner and is
inexpensive to invert.

We chose $\tilde{\mtx{A}}^\tau_{ii}$ to be the discretized homogenized
differential operator on the interior of the box $\tau$.  It is known
that the homogenized Helmholtz operator is a good preconditioner for
geometries that are not large in terms of wavelength.  Since in
practice the leaf boxes are never more than 5 wavelengths in size, a
homogenized operator works well as preconditioner for it.  Note that
homogenized operators are, in general, \emph{not good preconditioners}
for high frequency problems since they are spectrally too different
from the non-homogenized counterpart \cite{Sanchez-Palencia1980}.

Specifically, we set
\[\widetilde{\mtx{A}}^\tau_{ii} = - \boldsymbol{I}_{(n_c-2)} \otimes
\boldsymbol{I}_{(n_c-2)} \otimes \boldsymbol{L}_1 -
\boldsymbol{I}_{(n_c-2)} \otimes \boldsymbol{L}_1 \otimes
\boldsymbol{I}_{(n_c-2)} - \boldsymbol{L}_1 \otimes
\boldsymbol{I}_{(n_c-2)} \otimes \boldsymbol{I}_{(n_c-2)} -
\lambda^\tau \mtx{I}_{n_i},\]
where $\mtx{I}_{n_i}$ is the inverse of size $n_i \times n_i$,
$\boldsymbol{I}_{(n_c-2)}$ is the ${(n_c-2)} \times {(n_c-2)}$
identity matrix, and
\[\lambda^\tau = \frac{\max(\text{diag}(\mtx{C}_{ii})) +
\min(\text{diag}(\mtx{C}_{ii}))}2.\]
Unlike the original operator, the inverse of this homogenized operator
can be evaluated for little cost.  In fact, the inverse can be written
down explicitly in terms of the eigenvalues and eigenvectors of the
one-dimensional derivative matrix $\mtx{L}_1$.  So it can be
constructed for $O(n_c^3)$ cost.  This is in contrast to the at best
$O(n_c^6)$ cost of constructing the inverse of $\mtx{A}_{ii}^{\tau}$.

To explain the efficient inversion of the homogenized operator, let
$\mtx{L}_1 = \mtx{V}\mtx{E}\mtx{V}^{-1}$ denote the eigenvalue
decomposition of the one dimensional derivative matrix $\mtx{L}_1$;
i.e.  $\mtx{V}$ denotes the matrix whose columns are the eigenvectors
of $\mtx{L}_1$ and $\mtx{E}$ denotes the diagonal matrix where the
non-zero entries are the corresponding eigenvalues.  Then the
three-dimensional discrete Laplacian can be written as
\begin{equation*}
  \boldsymbol{L}_3 = \widetilde{\boldsymbol{V}} \boldsymbol{E}_\Delta
  \widetilde{\boldsymbol{V}}^{-1}
\end{equation*}
where 
\begin{align*}
\widetilde{\boldsymbol{V}} &:= \boldsymbol{V} \otimes
\boldsymbol{V} \otimes \boldsymbol{V},\\
\boldsymbol{E}_\Delta &:=
\boldsymbol{E} \otimes \boldsymbol{I}_{(n_c-2)} \otimes
\boldsymbol{I}_{(n_c-2)} + \boldsymbol{I} \otimes \boldsymbol{E} \otimes
\boldsymbol{I}_{(n_c-2)} \otimes + \boldsymbol{I}_{(n_c-2)} \otimes
\boldsymbol{I}_{(n_c-2)} \otimes \boldsymbol{E}\ {\rm and } \\
\widetilde{\boldsymbol{V}}^{-1} &:= \boldsymbol{V}^{-1} \otimes
\boldsymbol{V}^{-1} \otimes \boldsymbol{V}^{-1}.
\end{align*}
 This means that
\[\mtx{\widetilde{A}}^\tau_{ii} = \widetilde{\mtx{V}}
\left(-\mtx{E}_\Delta - \lambda\mtx{I}_{n_i}\right)
\widetilde{\mtx{V}}^{-1}.\]
Additionally, the inverse is given by
\[\mtx{\widetilde{A}}^{\tau,-1}_{ii} = \widetilde{\mtx{V}}
\left(-\mtx{E}_\Delta -
\lambda\mtx{I}_{n_i}\right)^{-1}\widetilde{\mtx{V}}^{-1}.\]
The inverse of $\left(-\mtx{E}_\Delta-\lambda\mtx{I}_{n_i}\right)$ can
be evaluated explicitly for little cost because $\mtx{E}_\Delta$
consists of the Kronecker product of diagonal matrices. Algorithm 3
outlines how $\mtx{\widetilde{A}}^{\tau,-1}_{ii}$ can be applied
rapidly to a vector.

\begin{remark}
  The blocks of the block-diagonal preconditioner presented in this
  section are built for each leaf independently.  Time in constructing the 
  preconditioner can be reduced by re-using the same local approximate 
  solver for leaves that have roughly the same medium.  However, the
  effectiveness will depend on how much the function $b(\vct{x})$ varies
  over the boxes where the homogenized operators are being reused.
\end{remark}

\begin{FloatingAlgorithm}
  \noindent\fbox{
\begin{minipage}{\textwidth}
\begin{center}
\textsc{Algorithm 3} $\left(\text{Fast application of }
\widetilde{\mtx{A}}^{\tau,-1}_{ii}\right)$
\end{center}
Let $\boldsymbol{v}$ be a vector of size $n_i=n_1^3$ (where $n_1 =
n_c-2$), the algorithm calculates $\boldsymbol{w} =
\widetilde{\boldsymbol{A}}^{\tau,-1}_{ii}
\boldsymbol{v}$.
\\
The function reshape$(\cdot,(p,q))$ is the classical reshape by
columns function, as found in \texttt{MATLAB} and \texttt{FORTRAN}
implementations.\\
\rule{\textwidth}{0.5pt}
\begin{tabbing}
\mbox{}\hspace{7mm} \= \mbox{}\hspace{6mm} \= \mbox{}\hspace{6mm} \=
\mbox{}\hspace{6mm} \= \mbox{}\hspace{6mm} \= \kill
(1) \> $\boldsymbol{M}_1 =
\text{reshape}\left(\boldsymbol{v},\left(n_1^2,n_1\right)\right)
\widetilde{\boldsymbol{V}}^{-1}$ \\
(2) \> \textbf{for} $j = 1,\dots,n_1$ \\
(3) \> \> $\boldsymbol{M}_2(:,j) =
\text{vec}\left(\widetilde{\boldsymbol{V}}^{-1}
  \text{reshape}\left(\boldsymbol{M}_1(:,j),\left(n_1,n_1\right)\right)
  \left(\widetilde{\boldsymbol{V}}^{-1}\right)^\intercal \right)$ \\
(4) \> \textbf{end for} \\
(5) \> $\boldsymbol{M}_1 =
\text{reshape}\left(\left(\boldsymbol{E}_\Delta - \lambda
\boldsymbol{I}_{i} \right)^{-1}
\text{vec}\left(\boldsymbol{M}_2\right),\left(n_1^2,n_1\right)\right)
\widetilde{\boldsymbol{V}}$ \\
(6) \> \textbf{for} $j = 1,\dots,n_1$ \\
(7) \> \> $\boldsymbol{M}_2(:,j) =
\text{vec}\left(\widetilde{\boldsymbol{V}}
\text{reshape}\left(\boldsymbol{M}_1(:,j),\left(n_1,n_1\right)\right)
\left(\widetilde{\boldsymbol{V}}\right)^\intercal \right)$ \\
(8) \> \textbf{end for} \\
(9) \> $\boldsymbol{w} = \text{vec}\left(\boldsymbol{M}_2\right)$
\end{tabbing}
\end{minipage}}
\end{FloatingAlgorithm}

\begin{FloatingAlgorithm}
\noindent\fbox{
\begin{minipage}{\textwidth}
\begin{center}
\textsc{Algorithm 4} $\left(\text{Application of the block-Jacobi
preconditioner } \boldsymbol{J}^{-1}\right)$
\end{center}
Let \[\boldsymbol{v} = \left(\begin{smallmatrix}
\boldsymbol{v}^{\tau=1} \\ \dots \\
\boldsymbol{v}^{\tau=N^\tau}\end{smallmatrix} \right)\] be a vector of
size $N^\tau n^\tau$, where \[\boldsymbol{v}^\tau =
\left(\begin{smallmatrix} \boldsymbol{v}^\tau_i \\
\boldsymbol{v}^\tau_b \end{smallmatrix}\right)\] is the sub-vector
associated to leaf $\tau$ (respectively $\boldsymbol{w},
\boldsymbol{w}^\tau$). The algorithm calculates $\boldsymbol{w} =
\boldsymbol{J}^{-1} \boldsymbol{v}$.\\
\rule{\textwidth}{0.5pt}
\begin{tabbing}
\mbox{}\hspace{7mm} \= \mbox{}\hspace{6mm} \= \mbox{}\hspace{6mm} \=
\mbox{}\hspace{6mm} \= \mbox{}\hspace{6mm} \= \kill
(1) \> \textbf{for} $\tau = 1, \dots, N_\ell$ \\
(2) \> \> calculate $\boldsymbol{f}_i =
\widetilde{\boldsymbol{A}}^{\tau,-1}_{ii} \boldsymbol{v}_i$ using
section \ref{sec:hom}, \\
(3) \> \> calculate $\boldsymbol{h}_b =
\boldsymbol{A}^\tau_{bi} \boldsymbol{f}_i -
\boldsymbol{v}_b$ using a sparse $\boldsymbol{A}^\tau_{bi}$,\\
(4) \> \> $\boldsymbol{g}_b = \widetilde{\mtx{S}}^{\tau,-1}
\boldsymbol{h}_b$ where $\widetilde{\mtx{S}}^{\tau,-1}$ has been
precomputed (see section \ref{sec:implementation}), \\
(5) \> \> $\boldsymbol{w}^\tau = \left(\begin{smallmatrix}
\boldsymbol{f}_i + \widetilde{\mtx{A}}^{\tau,-1}_{ii}
\boldsymbol{A}^\tau_{ib} \boldsymbol{g}_b \\
\boldsymbol{g}_b \end{smallmatrix}\right)$ using section \ref{sec:hom} and
sparse $\boldsymbol{A}^\tau_{ib}$.\\
(6) \> \textbf{end for}
\end{tabbing}
\end{minipage}}
\end{FloatingAlgorithm}

\subsection{Homogenization a preconditioner for the leaf
solve}\label{sec:homlocal}
This section illustrates the performance of the homogenization as a
preconditioner for a leaf box.  Since we are using a \texttt{GMRES}
solver, the clustering of the spectrum away from the origin is the key
to designing an effective preconditioner.

For illustration purposes, consider a leaf box of dimension
$0.25\times0.25\times0.25$ centered at $(0.125,0.125,0.125)$, where
the deviation from constant coefficient is $b(x) = -1.5
e^{-160(x^2+y^2+z^2)}$, $\kappa=40$ and the impedance boundary data is
given random complex values. Let
\begin{equation}\label{eqn:localsys}
  \mtx{A}_{\rm loc} \vct{w}= \vct{v}
\end{equation}
denote the discretized local problem corresponding to (\ref{eq:leaf}).

\begin{figure}
  \begin{subfigure}{0.49\textwidth}
    \includegraphics[width=\textwidth]{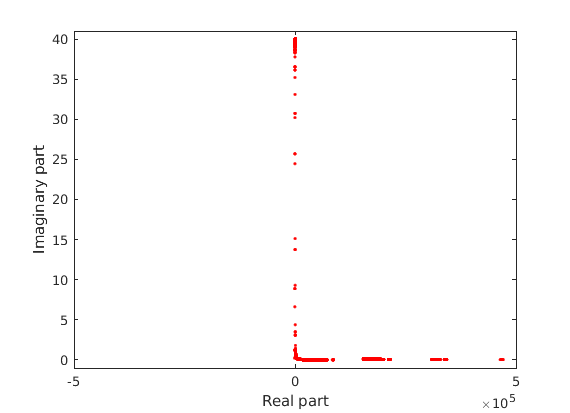}
    \caption{Unpreconditioned}
  \end{subfigure}
  \begin{subfigure}{0.49\textwidth}
    \includegraphics[width=\textwidth]{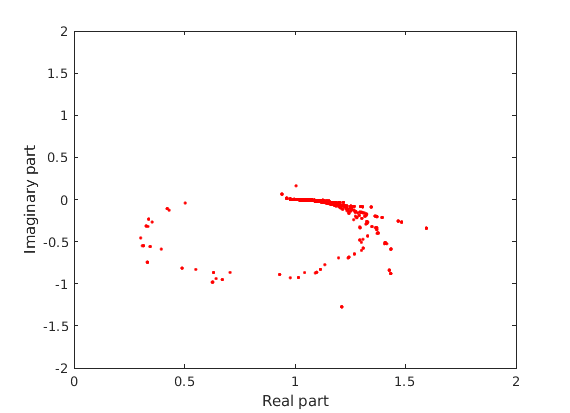}
    \caption{Preconditioned with an homogenized coefficient}
  \end{subfigure}
  \caption{Spectrum of a leaf matrix of dimension
    $0.25\times0.25\times0.25$ centered at $(0.125,0.125,0.125)$ with
    $b(x) = -1.5 e^{-160(x^2+y^2+z^2)}$, $\kappa=40$. 
  }
  \label{fig:localSU}
\end{figure}
Figure \ref{fig:localSU} reports on the spectrum of $\mtx{A}_{\rm
loc}$ without the preconditioner and the corresponding left
preconditioned problem.  It is clear that the preconditioner does an
excellent job of clustering the spectrum at $(1,0)$. Table
\ref{tab:localSU} reports the time for solving \eqref{eqn:localsys}
via backslash in \texttt{MATLAB} and using the preconditioned
iterative solution technique.  The iterative solution technique is 19
times faster than using backslash to solve the local problem.  These
results illustrate the preconditioner does an excellent job efficiently
handling local phenomena.

\begin{table}
  \centering
  \begin{tabular}{c|c|c|c}
    $n_c$ & Time backslash [s] & Time iterative [s] & Speed Up \\ \hline
    8     & 0.0108             & 0.0145             & 0.742 \\
    12    & 0.169              & 0.0258             & 6.53  \\
    16    & 0.815              & 0.0417             & 19.6  \\
  \end{tabular}
  \caption{Comparison of times between a direct inversion of
    $\mtx{A}_{\rm loc}$ and a preconditioned \texttt{GMRES}}
  \label{tab:localSU}
\end{table}

\section{Implementation}
\label{sec:par}
The code for the numerical experiments in section \ref{sec:numerics}
is written in \texttt{FORTRAN 90}.  This section describes the other
details about the implementation of the solution technique including
hardware, compilers, libraries and related software.

\subsection{Algebraic operations and sparse matrices} \label{sec:implementation}
The Intel \texttt{MKL} library with \texttt{complex} types are
utilized for the inversion of matrices, eigenvalue calculations,
sparse and full matrix-matrix and matrix-vector operations. The codes
are compiled using Intel \texttt{FORTRAN} compilers and libraries
version 20.2. We use the \texttt{MKL} \texttt{CSR} format and the
associated routines for algebraic operations with sparse matrices. The
matrices for the leaf boxes are never constructed explicitly. The
largest matrices that are constructed are the homogenized Schur
complement preconditioners; $\widetilde{\mtx{S}}^{\tau,-1}, \forall
\tau$; which are relatively small in size ($n_b \times n_b$).  These
homogenized Schur complement matrices are precomputed before the
iterative solver is applied.

\subsection{Distributed memory}
The parallelization and global solvers are accessed through the
\texttt{PETSC} library version 3.8.0. We use the \texttt{KSP}
implementation of \texttt{GMRES} present in \texttt{PETSC} and the
\texttt{MatCreateShell} interface for the matrix-free algorithms
representing $\mtx{A}$ and $\mtx{J}^{-1}$.  The message-passing
implementation between computing nodes uses Intel \texttt{MPI} version
$2017.0.098$.

\subsection{Hardware}
All experiments in Section \ref{sec:numerics}, were run on the
\texttt{RMACC Summit} supercomputer. The system has peak performance
of over 400 TFLOPS. The 472 general compute nodes each have 24 cores
aboard Intel Haswell CPUs, 128 GB of RAM and a local SSD.  This means
that each core has a limit of 4.6GB memory footprint.  

All nodes are connected through a high-performance network based on
Intel Omni-Path with a bandwidth of 100 GB/s and a latency of 0.4
microseconds. A 1.2 PB high-performance IBM GPFS file system is
provided.

\section{Numerical results}\label{sec:numerics}
This section illustrates the performance of the solution technique
presented in this paper.  In all the experiments, the domain $\Omega$
is the unit cube $[0,1]^3$.  While the variation in the medium can be
any smooth function, in this paper we consider the variation in the
medium $b(\vct{x})$ in \eqref{eqn:diffhelm} defined as
\begin{equation}
  b(\boldsymbol{x}) = -1.5 e^{-160 \left((x-0.5)^2+(y-0.5)^2+(z-0.5)^2\right)},
  \label{eqn:source}
\end{equation}
representing a Gaussian \emph{bump} centered in the middle of the unit
cube.  The performance of the solution technique will be similar for
other choices of $b(\vct{x})$.

The Chebyshev polynomial degree is fixed with $n_c = 16$. This follows
from the two dimensional discretization versus accuracy study in
\cite[Section 4.1]{BeamsGillmanHewett2020}. Also, this choice of
discretization order falls into the regime where the local
preconditioner was shown to be effective in section
\ref{sec:homlocal}.  For the scaling experiments, the solution is not
known.  For the experiments exploring the accuracy of the solution
technique the solution is known.  Throughout this section, we make
reference to several different terms.  For simplicity of presentation
they are defined here:

\noindent
$\bullet$ \textbf{ppw} denotes points per wavelength.  The ppw are
measured by counting the points along the axes and not across the
diagonal of the geometry.  The number of wavelengths across the
diagonal of the geometry is $\sqrt{3}$ times ppw.  \\
\noindent
$\bullet$ \textbf{its.} denotes the number of iterations that GMRES
took to achieve a specific result.\\
\noindent
$\bullet$ \textbf{Preconditioned residual reduction} (PCRR) denotes
the residual reduction stopping criteria set for \texttt{GMRES}. \\
\noindent
$\bullet$ \textbf{$r_k$} denotes the \textbf{preconditioned residual}
at iteration $k$ and is defined as
$$r_k = \|\mtx{J}^{-1} \left(\vct{b} - \mtx{A}
\vct{x}_k \right)\|_2$$ where $\vct{x}_k$ is the approximate solution
at iteration $k$.\\
\noindent
$\bullet$ The \textbf{Reference Preconditioned Residual Reduction
(RPRR)} denotes the stopping criteria for \texttt{GMRES} so that all
the possible digits attainable by the discretization are realized.\\
\noindent
$\bullet$ \textbf{$E_h$} denotes the \textbf{relative error} defined
by
\begin{equation*}
  E_h = \frac{\sqrt{\displaystyle\sum_{i=1}^{N }
      \left(u_h(\boldsymbol{x}_i) -
        u(\boldsymbol{x}_i)\right)^2}}{\sqrt{\displaystyle\sum_{i=1}^{N}
      u(\boldsymbol{x}_i)^2}},
\end{equation*}
where $N = N_\ell n$ is the total number of unknowns,
$u_h(\boldsymbol{x})$ is the approximate solution at the point
$\boldsymbol{x}$ obtained by allowing \texttt{GMRES} to \emph{run
until the accuracy no longer improves}, $u(\boldsymbol{x})$ is the
evaluation of the exact solution at the point $\boldsymbol{x}$, and
$N_\ell$ is the number of leaf boxes.  So the $E_h$ will stall at the
accuracy of the discretization. \\
\noindent
$\bullet$ \textbf{$E_h^{\rm it}$} denotes the relative error obtained
by the solution created by the iterative solver when with a fixed
residual reduction; i.e. stopping criterion.  As the residual
reduction decreases, $E_h^\text{it}$ converges to $E_h$.\\
\noindent
$\bullet$ \textbf{MPI Processes} (MPI Procs) are the computer
processes that run in parallel, exchanging information between them
using the Message Passing Interface (MPI). In our paper, each core
used contains only 1 MPI process and has access to a max of 4.6GB of
RAM.

This section begins by illustrating the scaling of the solver.  Next,
section \ref{sec:sep} explores the relationship between the accuracy
of the solution technique and the \texttt{GMRES} stopping criterion in
an effort to prevent from an excessive number of iterations being
performed.  A rule called the RPRR is developed for the stopping
criterion and tested on a different problem in section
\ref{sec:bumps}.

\subsection{Scaling}
\label{sec:scaling}
This section investigates the scaling of the parallel implementation
of the solver.  Both strong and weak scaling data is collected.
Additionally, we collected data to asses the scaling of the solver for
problems where the problem size in wavelengths per process is kept
fixed.  This last example closely aligns with how practitioners would
like to solve Helmholtz problems.

For all experiments in this section, the right hand side of
(\ref{eqn:diffhelm}) is $s(\boldsymbol{x})=b(\boldsymbol{x}) e^{i
\kappa \boldsymbol{x}}$. Figure \ref{fig:scaling} illustrates the
solution plotted on the planes $y=0.5$ and $z = 0.5$ for $\kappa=40,\
80,$ and $160$ with $4^3, \ 8^3$, and $16^3$ leaves, respectively.
\begin{figure}
  \centering
  \begin{tabular}{ccc}
  \includegraphics[width=0.32\textwidth]{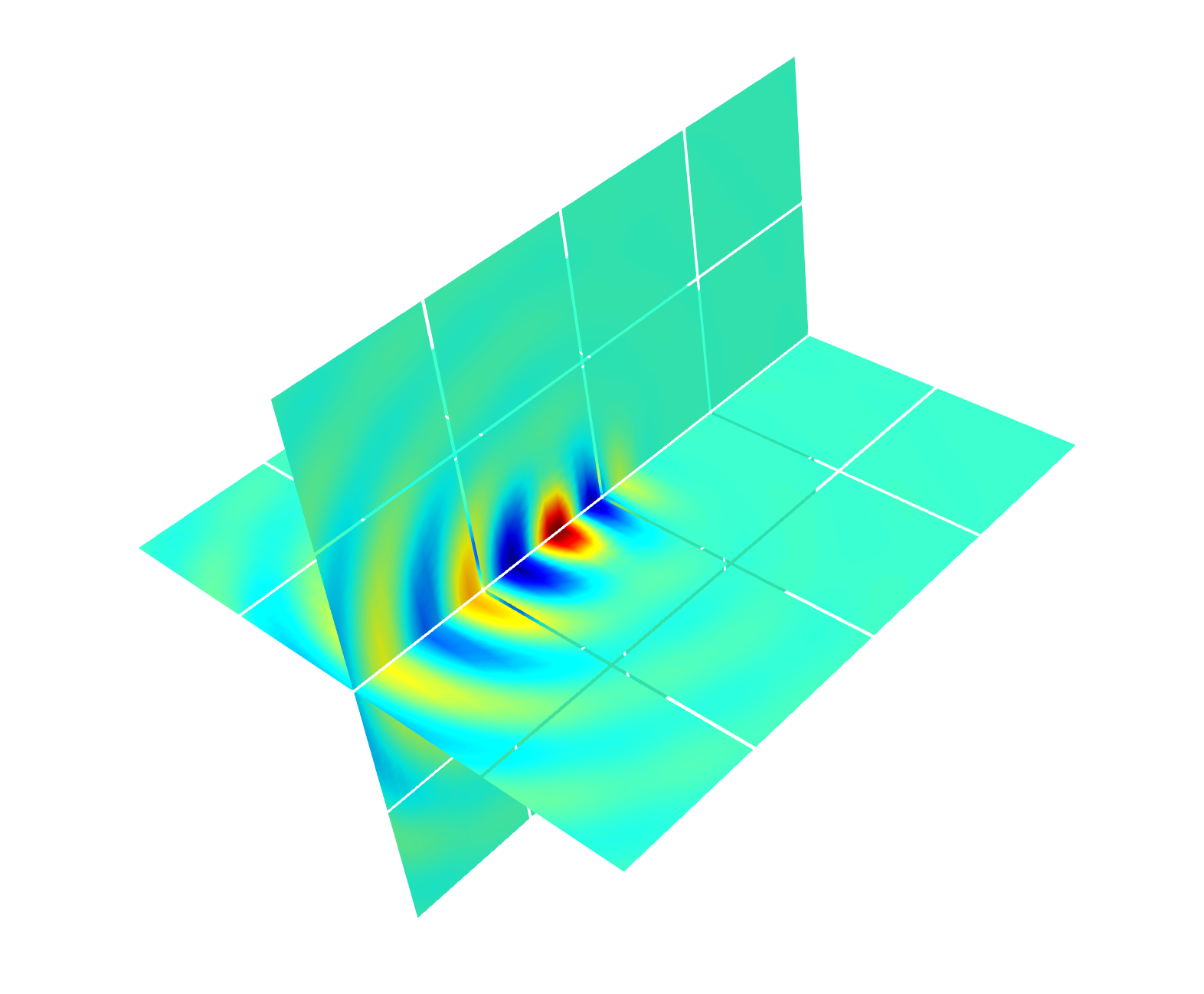} &
  \includegraphics[width=0.32\textwidth]{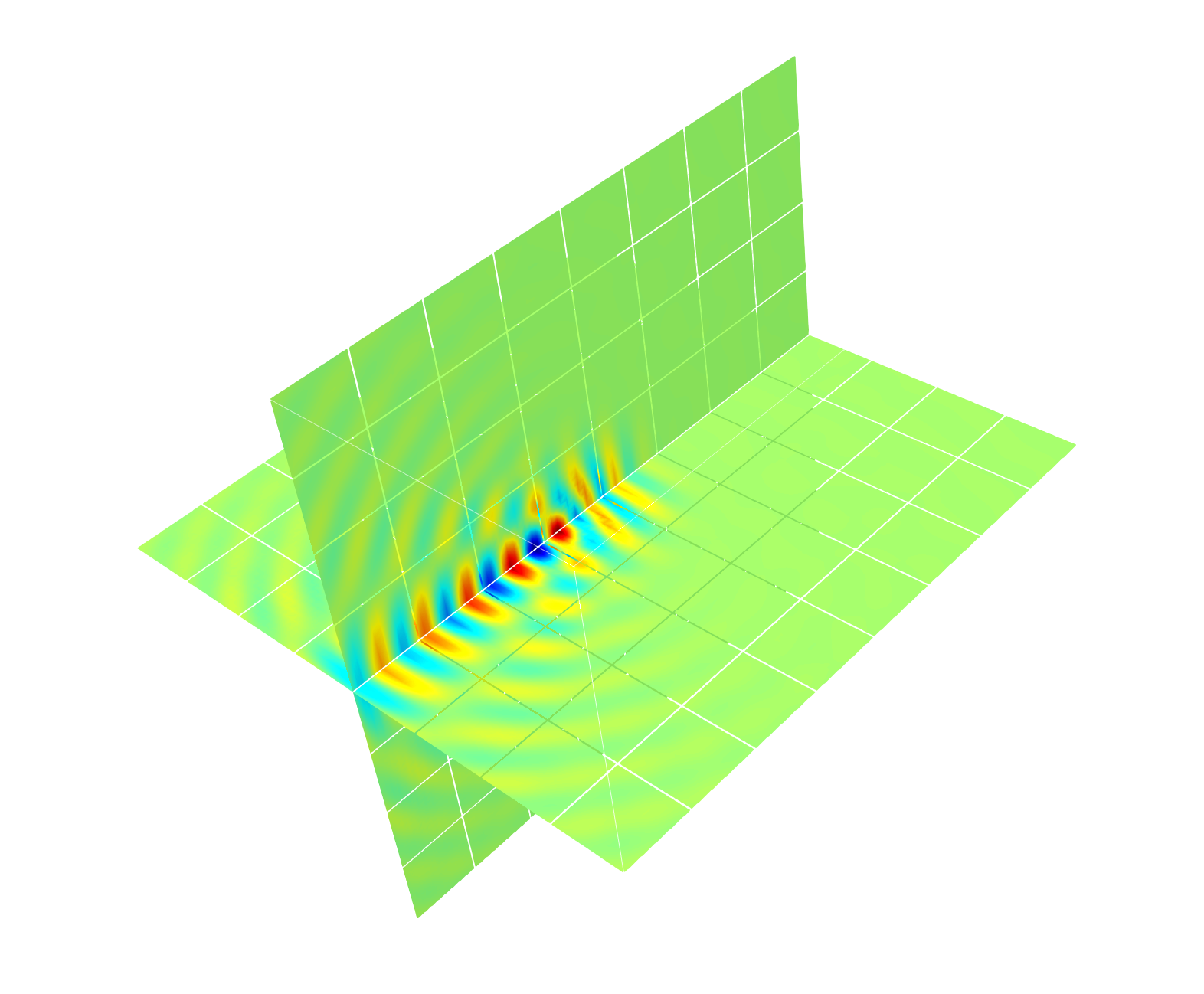}&
  \includegraphics[width=0.32\textwidth]{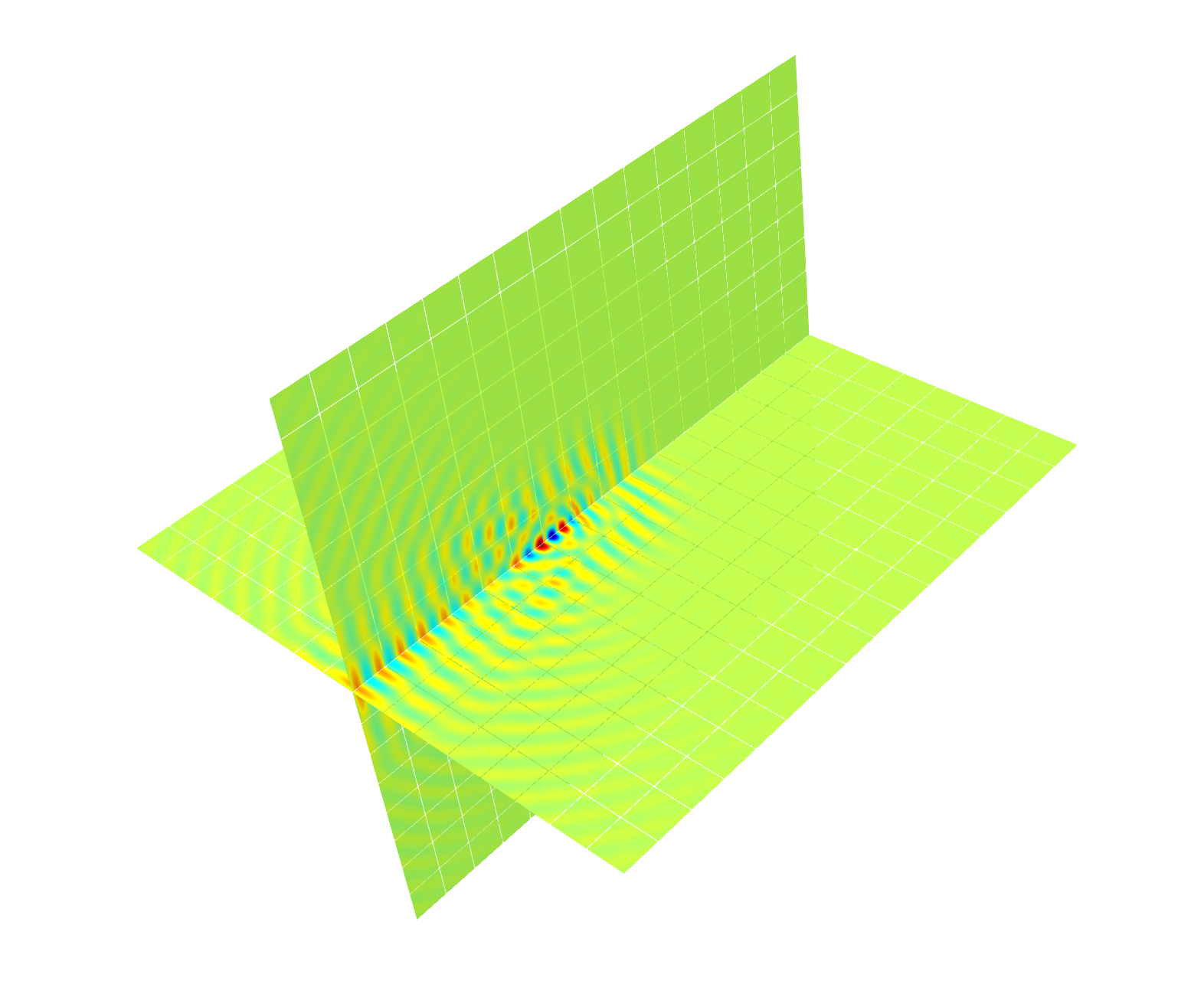}\\
  (a)& (b)&(c)
  \end{tabular}
  \caption{Illustration of the exact solution on the planes $y=0.5$
    and $z = 0.5$ for a problem with (a) $\kappa=40$ and $4^3$ leaves, (b)
    $\kappa=80$ and $8^3$ leaves, and (c) $\kappa=160$ and $16^3$ leaves
    from left to right. The $x-$axis runs diagonally to the right, the
    $y-$axis diagonally to the left and the $z-$axis is vertical.}
  \label{fig:scaling}
\end{figure}

\subsubsection{Strong scaling}
\label{sec:strong}
To investigate the strong scaling of the algorithm, we look at the
solution time versus the number of processes for a fixed problem size.
For this experiment, the stopping criterion for \texttt{GMRES} is
fixed at $10^{-8}$.  The problem under consideration is fixed with
$\kappa = 40$ with $8^3$ leaf boxes.  This corresponds to the solution
in Figure \ref{fig:scaling}(a).  The number of \texttt{MPI} processes
gets doubled in each experiment. Figure \ref{fig:strongscaling}
reports the scaling results.  The run times reduce from approximately
7 minutes for a sequential execution to well under a minute with the
minimum time happening with 16 \texttt{MPI} processes.  The 32
\texttt{MPI} processes case only has 2 leaves per process, which
implies that there is a lot of inter-process communication which
explains the increase in time between 16 and 32 \texttt{MPI}
processes.  Even so, the solution time remains under a minute.  This
is thanks to the fact the purely local operations dominate the
computations in this solution technique.  For the other experiments,
the run time approximately halves when the amount of processes is
doubled which is expected from a well implemented distributed memory
model.
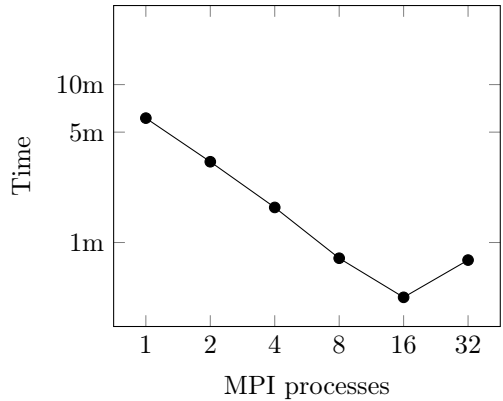
\begin{figure}
  \centering
  \begin{tikzpicture}
    \begin{loglogaxis}[
      scale=.75,
      xlabel={MPI processes},
      ylabel={Time},
      ytick={60,300,600},
      yticklabels={1m,5m,10m},
      ymin=0,
      ymax=1900,
      xtick={1,2,4,8,16,32},
      xticklabels={1,2,4,8,16,32}]
      \addplot[color=black,mark=*] coordinates {
        (32,0.46444190E+02)
        (16,0.26980971E+02)
        ( 8,0.47720490E+02)
        ( 4,0.10006856E+03)
        ( 2,0.19480623E+03)
        ( 1,0.36805203E+03)
      };
    \end{loglogaxis}
  \end{tikzpicture}
  \caption{A plot of the time in minutes versus the number of
    \texttt{MPI} processes when applying the presented solution technique
    to a problem discretized with $8^3$ leaf boxes to illustrate the strong scaling of the solution technique.}
  \label{fig:strongscaling}
\end{figure}

\subsubsection{Weak scaling}
\label{sec:weak}
Weak scaling investigates how the solution time varies with the number
of processes for a fixed problem size per process and a fixed residual
reduction PCRR.  The discretization is refined and at the same time
more \texttt{MPI} processes are used in order to keep the number of
degrees of freedom per process constant.  Two experiments are
considered for weak scaling: a fixed wave number for all experiments
and an increased wave number to maintain a fixed number of points per
wavelength (ppw) while the mesh is refined.  The latter is
representative of the type of experiments desired by practitioners.
Figure \ref{fig:weakscaling} reports the weak scaling for when the
local problem size is fixed to $4^3$; i.e. $4^3$ leaf boxes are
assigned to each process.
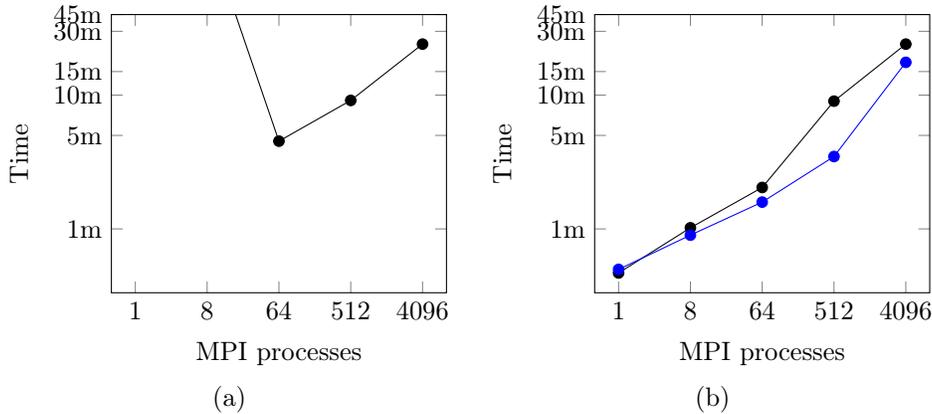
\begin{figure}
\begin{tabular}{cc}
  \begin{tikzpicture}
    \begin{loglogaxis}[
      scale=0.65,
      xlabel={MPI processes},
      ylabel={Time},
      ytick={60,300,600,900,1800,2400},
      yticklabels={1m,5m,10m,15m,30m,45m},
      ymin=20,
      ymax=2400,
      xtick={1,8,64,512,4096},
      xticklabels={1,8,64,512,4096},
      xmin=0.5,
      xmax=8192]
      \addplot[color=black,mark=*] coordinates {
        (   8,1E+04)
        (  64,0.27172773E+03)
        ( 512,0.54778151E+03)
        (4096,0.14438151E+04)
      };
    \end{loglogaxis}
  \end{tikzpicture}
 &   \begin{tikzpicture}
    \begin{loglogaxis}[
      scale=0.65,
      xlabel={MPI processes},
      ylabel={Time},
      ytick={60,300,600,900,1800,2400},
      yticklabels={1m,5m,10m,15m,30m,45m},
      ymin=20,
      ymax=2400,
      xtick={1,8,64,512,4096},
      xticklabels={1,8,64,512,4096},
      xmin=0.5,
      xmax=8192]
      \addplot[color=black,mark=*] coordinates {
        (   1,0.28114729E+02)
        (   8,0.61147678E+02)
        (  64,0.12244576E+03)
        ( 512,0.54093485E+03)
        (4096,0.14438151E+04)
      };
      \addplot[color=blue,mark=*] coordinates {
        (   1,0.29887427E+02)
        (   8,0.53907974E+02)
        (  64,0.95190765E+02)
        ( 512,0.20858756E+03)
        (4096,0.10550612E+04)
      };
    \end{loglogaxis}
  \end{tikzpicture}\\
  (a) & (b)
\end{tabular}
\caption{Illustration of the weak scaling of the algorithm.  The time in minutes for solving the Helmholtz problem 
  (a) with a fixed wave number problem with $\kappa = 320$ and 
  (b) maintaining $16$ ppw (black) and $8$ ppw (blue) on a geometry with a fixed $4\times 4\times
  4$ leaves local problem on each process.
}
\label{fig:weakscaling}
\end{figure}

Figure \ref{fig:weakscaling}(a) reports on the performance when the
wave number fixed to $\kappa = 320$ and the residual reduction is set
to $10^{-10}$. The problems range from 4 to 1 a wavelength per
leaf. The experiments with a small number of processes correspond to
many wavelengths per leaf. For example, 64 MPI processes corresponds
to 4 wavelengths per leaf while 8 MPI processes corresponds to 8
wavelengths per leaf.  It is well-known that local homogenization will
not work well \cite{Chaumont2015} for the experiments with a high
number of wavelengths per leaf.  As the number of wavelengths per leaf
decreases, the homogenized preconditioner performs well and the
performance of the global solver is significantly better. It does
reach a point where the processes are saturated and communication
causes an increase in the timings.

Figure \ref{fig:weakscaling}(b) reports on the weak scaling
performance for experiments where the wave number is increased to
maintain $16$ and $8$ ppw plotted in black and blue, respectively.
For the $16$ ppw example, the residual reduction is fixed at
$10^{-10}$ for all cases resulting is a solution that is accurate to a
maximum of $8$ digits (see section \ref{sec:sep}).  For the $8$ ppw
example, the residual reduction is fixed at $10^{-8}$ for all cases
resulting in a solution that is accurate to approximately $4$ digits.
A flat curve is not expected in this weak scaling experiment.  To
understand this consider the Fourier \emph{modes} of the
\emph{residual}.  The block-Jacobi preconditioner is designed to
tackle local phenomena on each leaf which corresponds to high
frequency information in the \emph{residual}.  The lower-frequency
modes in the \emph{residual} remain relatively unpreconditioned.
Tackling these lower-frequency modes in the residual would require a
\emph{global} filter, i.e. an appropriate coarse solver, which is an
active area of research for Helmholtz problems. See
\cite{Brandt1977,Brandt1994} for general local Fourier error analysis,
\cite{Hemker2004,Hemker2003,LuceroGander2020} for block-Jacobi
preconditioners on non-overlapping discretizations and \cite[section
4]{GanderZhang2019} and \cite{ErnstGander2012} on scalable Helmholtz
solvers and preconditioners.

The runtimes for the fixed number of ppw experiments grow roughly
linearly for larger problems. This is expected since we use an
iterative method where information is only exchanged between
neighboring leaves. Necessarily, the number of steps needed to achieve
a fixed residual reduction must be at least equal to the distance
between opposite sides of the domain, counted in leaves sharing a face
\cite[p.17]{ToselliWidlund2005}. Ultimately we expect the scaling to
be superlinear, but a linear behavior for regimes involving a billion
degrees of freedom is very promising.

The largest problems under consideration correspond to the $4096$ MPI
processes experiments in Figure \ref{fig:scaling}(b) and over a
billion unknowns.  For the $16$ ppw example, this is a problem that is
approximately $50\times 50\times 50$ wavelengths in size and it takes
the solver approximately $24$ minutes to achieve the set accuracy.
For the $8$ ppw example, this problem is $100\times 100\times 100$
wavelengths in size and it takes the solver approximately $17$ minutes
to achieve the set accuracy.  This means that by lowering the desired
accuracy of the approximation, the solution technique can solve a
problems twice the size in wavelengths with the same computational
resources in less time.

Tables \ref{tab:mem16ppw} and \ref{tab:mem8ppw} report the scaling
performance including the memory usage for the experiments in Figure
\ref{fig:scaling}(b).  In these tables, it is easy to see the rough
doubling of the timings and the number of iterations.  It is also
important to note that the increase in memory per process is low as
the problem grows.  This is what allowed us to solve large problems on
the \texttt{RMACC Summit} cluster nodes which allows only 4.6 GB per
process.  In fact, the memory per process is under 3GB for all the
experiments.  This low memory footprint is thanks in large part to the
exploitation of the Kronecker product structure and the sparsity of
operators.
\begin{table}
  \centering
  \resizebox{\columnwidth}{!}{%
  \begin{tabular}{r|r|r|r|r|r}
    Problem size in leaves & $4\times 4\times 4$ & $8\times 8\times 8$ & $16\times 16\times 16$ & $32\times 32\times 32$ & $64\times 64\times 64$ \\ \hline
    Problem size in wavelengths & $3$ & $6$ & $12$ & $24$ & $48$ \\ 
    MPI procs                         & 1       & 8       & 64      & 512     & 4096    \\
    Degrees of freedom $N$            & 250k    & 2M      & 16M     & 128M    & 1027M   \\
    GMRES iterations                  & 156     & 216     & 402     & 736     & 1478    \\
    GMRES time                        & 28s     & 61s     & 122s    & 541s    & 1444s   \\
    Peak memory per process           & 1.94 GB & 2.21 GB & 2.29GB  & 2.38 GB & 2.70 GB
  \end{tabular}}
  \caption{Problem size in wavelengths, \texttt{MPI} processes (MPI procs), number of
    discretization points, number of \texttt{GMRES} iterations, time for
    the iterative solver to converge with a preconditioned relative
    residual of $10^{-10}$, and the peak memory per process for different
    problem sizes.  Here the wave number is increased to maintain $16$
    ppw.}
  \label{tab:mem16ppw}
\end{table}

\begin{table}
  \centering
  \resizebox{\columnwidth}{!}{%
  \begin{tabular}{r|r|r|r|r|r}
    Problem size in leaves & $4\times 4\times 4$ & $8\times 8\times 8$ & $16\times 16\times 16$ & $32\times 32\times 32$ & $64\times 64\times 64$ \\ \hline
    Problem size in wavelengths & $6$ & $12$ & $24$ & $48$ & $96$ \\ 
    MPI procs                         & 1       & 8       & 64      & 512     & 4096    \\
    Degrees of freedom $N$            & 250k    & 2M      & 16M     & 128M    & 1027M   \\
    GMRES iterations                  & 153     & 191     & 315     & 567     & 1065    \\
    GMRES time                        & 30s     & 54s     & 95s     & 209s    & 1055s   \\
    Peak memory per process           & 1.92 GB & 2.21 GB & 2.29GB  & 2.37 GB & 2.70 GB
  \end{tabular}}
  \caption{Problem size in wavelengths, \texttt{MPI} processes (MPI procs), number of
    discretization points, number of \texttt{GMRES} iterations, time for
    the iterative solver to converge with a preconditioned relative
    residual of $10^{-8}$, and the peak memory per process for different
    problem sizes.  Here the wave number is increased to maintain $8$
    ppw.}
  \label{tab:mem8ppw}
\end{table}

\subsection{Relative residual rule for maximum accuracy}
\label{sec:sep}This section investigates the necessary stopping
criterion for \texttt{GMRES} in order to guarantee that all the digits
that are achievable by the discretization.  To do this, we consider
the boundary value problem with an exact solution of
\begin{equation}
  u(x,y,z) = e^{i \kappa (x + y + z)} e^{x} \cosh(y) (z + 1)^2
  \label{eqn:plane}
\end{equation}
for different wave numbers $\kappa$ and the variable coefficient
$b(\boldsymbol{x})$ as defined in equation \eqref{eqn:source}. Figure
\ref{fig:plane} illustrates the solution for different values of
$\kappa$.  The exact solution (\ref{eqn:plane}) is a plane wave in the
direction $(1,1,1)$ that is modulated differently along the $x$, $y$
and $z$ axes.

\begin{figure}
  \centering
  \begin{tabular}{ccc}
  \includegraphics[width=0.32\textwidth]{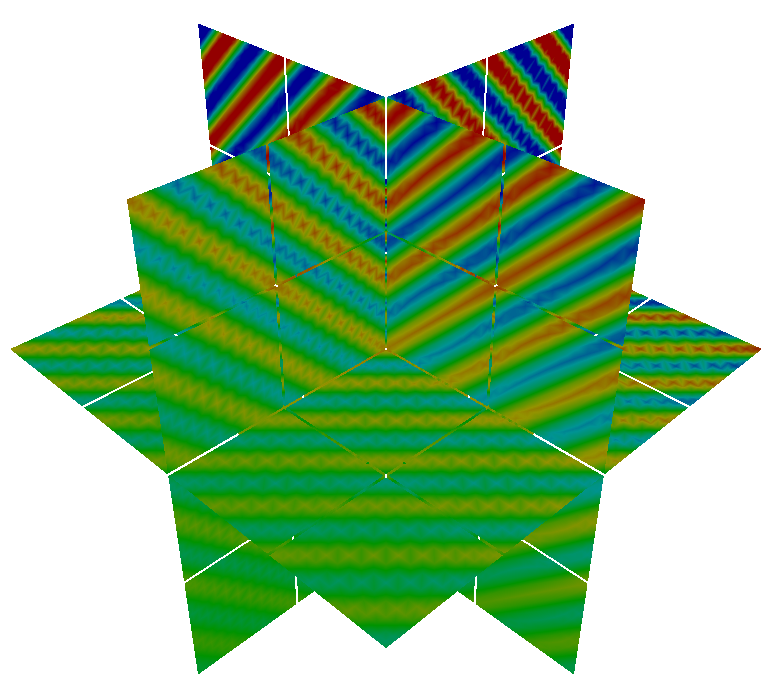}&
  \includegraphics[width=0.32\textwidth]{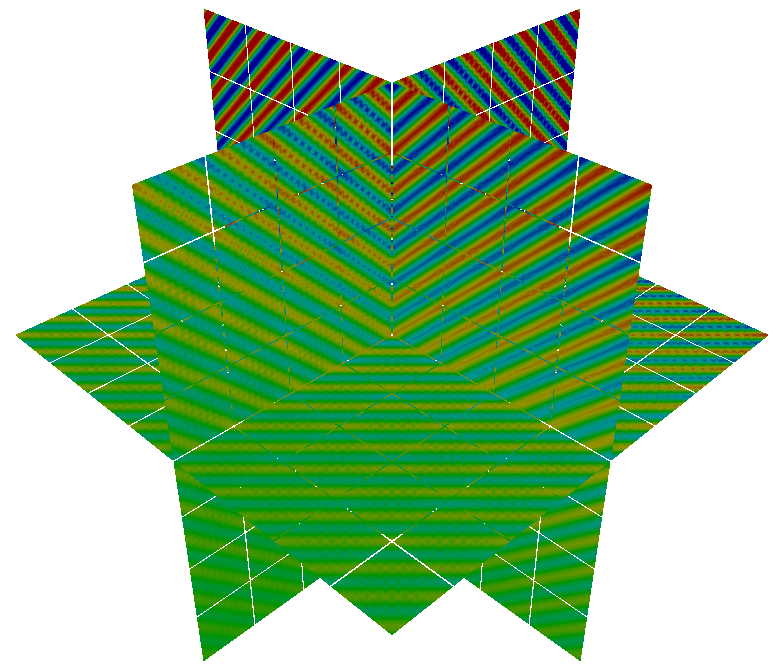}&
  \includegraphics[width=0.32\textwidth]{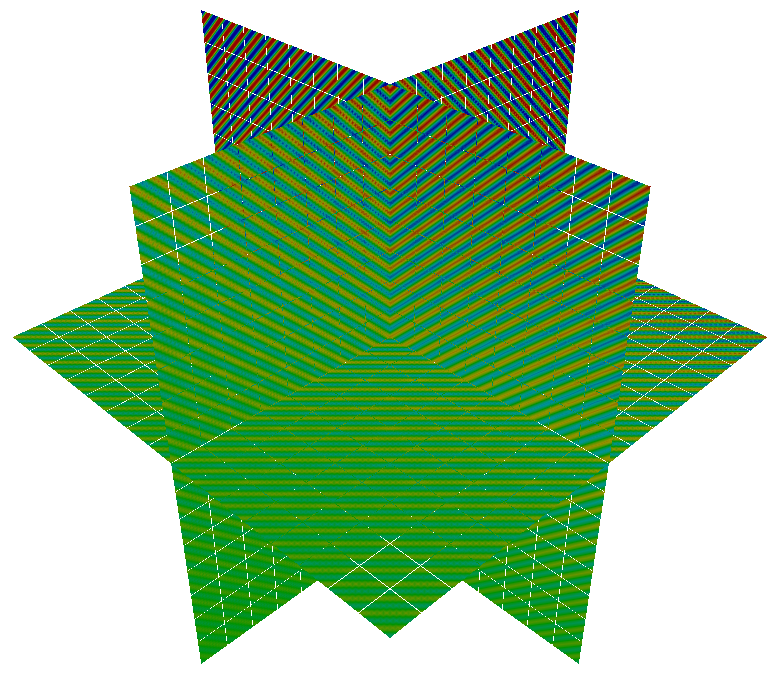}\\
  (a)&(b)&(c)
  \end{tabular}
  \caption{Illustration of the exact solution \eqref{eqn:plane} when
    $9.6$ ppw is maintained for $4^3$, $8^3$ and $16^3$ leaves (from left
    to right).  The plots of the solution are on the planes $x=0.5$,
    $y=0.5$ and $z = 0.5$ where the $x-$axis runs diagonally to the right,
    the $y-$axis diagonally to the left and the $z-$axis is vertical.}
  \label{fig:plane}
\end{figure}

Table \ref{tab:PlaneMinErr} reports the relative error $E_h$ and
digits of accuracy obtained by the discretization when the number of
ppw remains fixed across a row.  The error $E_h$ related to the ppw
remains fixed.  This is consistent with the accuracy observed when
applying this discretization to two dimensional problems. The values
range from orders of magnitude of $10^{-12}$ for 24 ppw to $10^{-7}$
for 9.6 ppw.

\begin{table}
  \centering
  \caption{The relative error $E_h$ and number of digits
    of accuracy obtained for different ppw and number of leaf boxes for the problem considered in section \ref{sec:sep} .}
  \begin{tabular}{c|c|c|c|c|c}
    \diagbox{ppw}{Leaves} & $4^3$ & $8^3$ & $16^3$ & $32^3$ & Digits\\\hline
    24.0 &\texttt{2.057E-12}&\texttt{1.763E-12}&\texttt{1.715E-12}&\texttt{1.727E-12}& 11\\
    19.2 &\texttt{8.872E-11}&\texttt{9.166E-11}&\texttt{9.275E-11}&\texttt{9.360E-11}& 10\\
    16.0 &\texttt{1.514E-09}&\texttt{1.922E-09}&\texttt{1.631E-09}&\texttt{1.667E-09}& 8\\
    12.0 &\texttt{7.121E-08}&\texttt{7.246E-08}&\texttt{7.221E-08}&\texttt{7.313E-08}& 7\\
    9.60 &\texttt{6.932E-07}&\texttt{5.631E-07}&\texttt{4.836E-07}&\texttt{4.478E-07}& 6\\
  \end{tabular}
  \label{tab:PlaneMinErr}
\end{table}

By looking at the residual reduction at each iteration of
\texttt{GMRES} we are able to define a stopping criterion which
prevents extra iterations from being performed but the accuracy of the
discretization is achieved.  As defined in the beginning of section
\ref{sec:numerics}, we call this the \textit{RPRR}.  Table
\ref{tab:PlaneResRed} reports the RPRR and the corresponding relative
iterative error $E_h^\text{it}$ when using this stopping criterion for
the same problems as in Table \ref{tab:PlaneMinErr}.  The relative
errors reported in both tables match in terms of digits.

Table \ref{tab:PlaneErrT} reports the time in seconds for the solver
and the number of \texttt{GMRES} iterations needed to obtain the
residual reduction in Table \ref{tab:PlaneResRed}.  The results from
this section suggest that it is sufficient to ask for the residual
reduction to be about two digits smaller than the expected accuracy of
the discretization.  Another observation from Table
\ref{tab:PlaneErrT} is that while the problem at 9.60 ppw has a
solution that is more oscillatory than the problem at 24 ppw the
solution technique converges faster.  This is because a lower accuracy
solution is obtained and the stopping criterion for \texttt{GMRES} is
also lower.  Looking left-to-right in this table, the number of
iterations and time to convergence increases.  This is due to the
global problem becoming larger in number of wavelengths as the number
of leaf boxes increases and the need to propagate information across
all the boxes.
\begin{table}
  \centering
  \caption{The RPRR needed to guarantee that the accuracy of the discretization is achieved
  in for the experiments in Table \ref{tab:PlaneMinErr}.  The relative error $E_h^\text{it}$ 
  is reported when using the RPRR as the stopping criteria for \texttt{GMRES}.
}
  \label{tab:PlaneResRed}
  \begin{tabular}{c|c|c|c|c|c}
    \diagbox{ppw}{Leaves} & $4^3$ & $8^3$ & $16^3$ & $32^3$ & RPRR \\\hline
    24.0 &\texttt{8.122E-13}&\texttt{1.632E-12}&\texttt{8.844E-13}&\texttt{5.231E-13}&$10^{-13}$\\
    19.2 &\texttt{8.883E-12}&\texttt{6.632E-12}&\texttt{3.534E-12}&\texttt{1.917E-12}&$10^{-12}$\\
    16.0 &\texttt{1.275E-09}&\texttt{1.681E-09}&\texttt{1.021E-09}&\texttt{5.151E-10}&$10^{-10}$\\
    12.0 &\texttt{1.438E-08}&\texttt{1.145E-08}&\texttt{7.215E-09}&\texttt{3.326E-09}&$10^{- 9}$\\
    9.60 &\texttt{2.541E-07}&\texttt{1.385E-07}&\texttt{8.280E-08}&\texttt{4.408E-08}&$10^{- 8}$\\
  \end{tabular}
\end{table}

\begin{table}
  \centering
  \caption{The time in seconds it takes \texttt{GMRES} and number of iterations
it takes \texttt{GMRES} to converge with the RPRR as the stopping criterion from table \ref{tab:PlaneResRed}
for various points per wavelength and meshes.}
  \label{tab:PlaneErrT}
  \begin{tabular}{r|r|r|r|r|r|r|r|r}
    \multirow{2}{*}{\diagbox{ppw}{Leaves}} & \multicolumn{2}{c|}{$4^3$} & \multicolumn{2}{c|}{$8^3$} & \multicolumn{2}{c|}{$16^3$} & \multicolumn{2}{c}{$32^3$} \\\cline{2-9}
         &time[s]& its.&time[s]& its.&time[s]& its.&time[s]& its.\\\hline
    24.0 &    48 & 216 &    97 & 296 &   351 & 477 &  947  & 775  \\
    19.2 &    55 & 187 &   125 & 263 &   278 & 392 &  869  & 668  \\
    16.0 &    35 & 146 &    96 & 184 &   389 & 277 &  482  & 466  \\
    12.0 &    40 & 141 &    78 & 165 &   281 & 243 &  554  & 407  \\
    9.6  &    38 & 140 &    38 & 142 &   235 & 191 &  447  & 320  \\
  \end{tabular}
\end{table}

\subsection{Effectiveness of the proposed rule for stopping criteria}
\label{sec:bumps} This section verifies the effectiveness of the RPRR
proposed in section \ref{sec:sep} as the stopping criteria for
\texttt{GMRES}.  That is, the desired residual reduction is set to 2-3
digits more than the accuracy that can be achieved by the
discretization.

The experiments in this section have an exact solution given by
\begin{equation}\label{eqn:bumps}
  u(x,y,z) = \left(1+e^{i \kappa x}\right)\left(1+e^{i \kappa
      y}\right)\left(1+e^{i \kappa z}\right) \ln\left(1 + x^2 + y^2 +
    z^2\right)
\end{equation}
which is not separable. The variable coefficient $b(\boldsymbol{x})$
is defined in equation \eqref{eqn:source}.  Figure \ref{fig:bumps}
illustrates the solution \eqref{eqn:bumps} for different values of
$\kappa$.  Roughly speaking, it is a grid of bumps along any axis that
is modulated by the function $\ln\left(1 + x^2 + y^2 +
z^2\right)$. The pattern of bumps makes it easy to observe the number
of wavelengths along the axes.  Given that this solution is more
oscillatory (with roughly the same magnitude) throughout the geometry
than the solution in the previous section, it is expected that it will
take more iterations for \texttt{GMRES} to converge.  This is
especially true for high frequency problems.
\begin{figure}
  \centering
  \begin{tabular}{ccc}
  \includegraphics[width=0.32\textwidth]{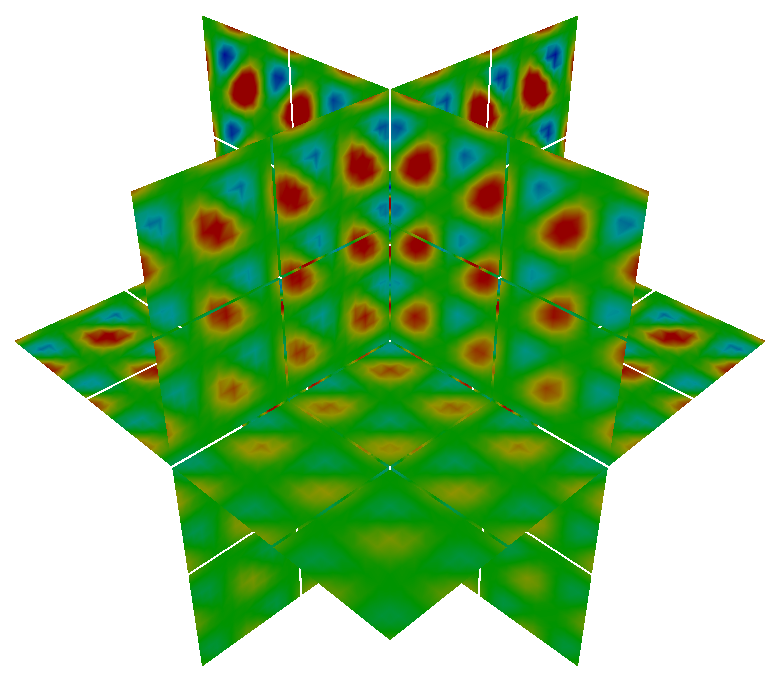} &
  \includegraphics[width=0.32\textwidth]{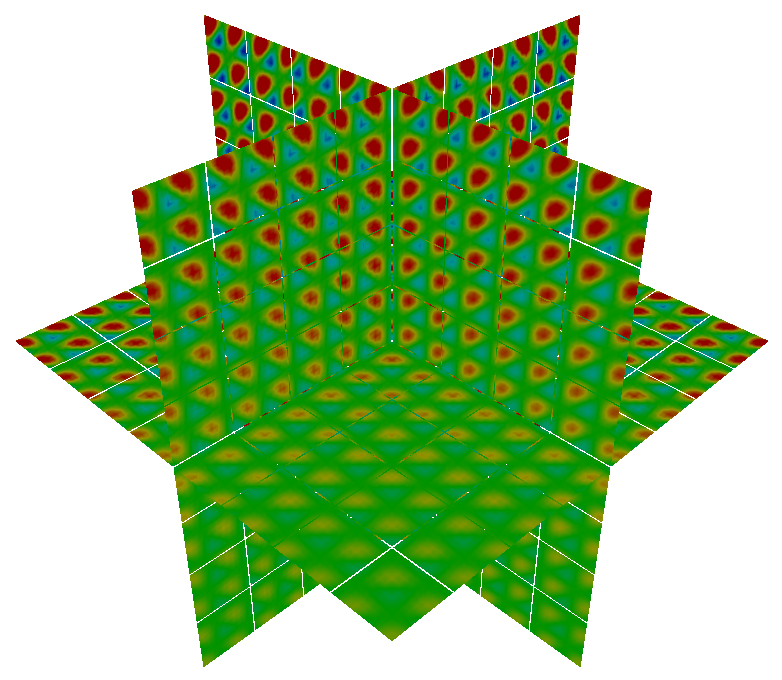}&
  \includegraphics[width=0.32\textwidth]{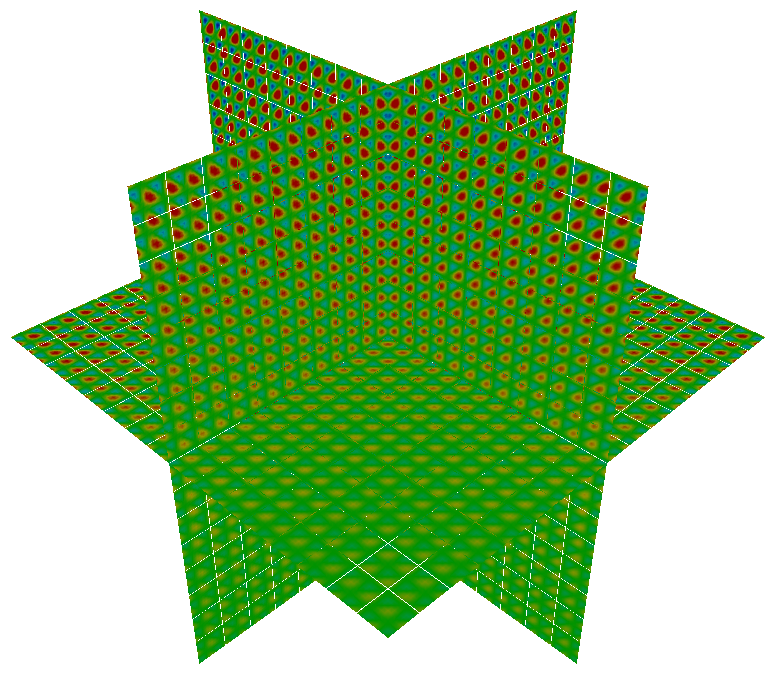}\\
  (a)&(b)&(c)
  \end{tabular}
  \caption{Illustration of the \textit{bumps} solution
    \eqref{eqn:bumps} cut at the planes $x=0.5$, $y=0.5$ and $z = 0.5$ for
    $9.6$ ppw with discretizations consisting of (a) $4^3$, (b) $8^3$ and
    (c) $16^3$ leaves.  The $x-$axis runs diagonally to the right, the
    $y-$axis diagonally to the left and the $z-$axis is vertical.}
  \label{fig:bumps}
\end{figure}

Table \ref{tab:BumpsMinErr} reports the relative error $E_h^\text{it}$
obtained by using the relative residual stopping criteria prescribed
in the Section \ref{sec:sep}.  The orders of magnitude in the error
are similar to what was observed for the separable solution in Table
\ref{tab:PlaneMinErr} confirming the accuracy of the discretization.
Table \ref{tab:BumpsMinErr2} reports the maximum achievable accuracy
of the discretization $E_h$.  The two errors are of the same order.
This means that the stopping criteria is giving the maximum possible
accuracy attainable by the discretization.
\begin{table}
  \centering
  \caption{The relative error $E_h^\text{it}$ obtained for
    different ppw and discretizations using the RPRR
    rule for the boundary value problem with solution given in (\ref{eqn:bumps}).}
  \begin{tabular}{c|c|c|c|c}
    \diagbox{ppw}{Leaves} & $4^3$ & $8^3$ & $16^3$ & $32^3$ \\\hline
    24.0 &\texttt{3.647E-12}&\texttt{3.957E-12}&\texttt{6.770E-12}&\texttt{1.270E-11} \\
    19.2 &\texttt{1.456E-10}&\texttt{2.933E-10}&\texttt{5.432E-10}&\texttt{1.004E-09} \\
    16.0 &\texttt{2.930E-09}&\texttt{7.184E-09}&\texttt{1.377E-08}&\texttt{2.869E-08} \\
    12.0 &\texttt{1.528E-07}&\texttt{2.629E-07}&\texttt{4.573E-07}&\texttt{7.196E-07} \\
    9.60 &\texttt{5.389E-06}&\texttt{1.614E-06}&\texttt{2.857E-06}&\texttt{4.461E-06} \\
  \end{tabular}
  \label{tab:BumpsMinErr}
\end{table}

\begin{table}
  \centering
  \caption{The relative error $E_h$ obtained for different ppw
    and discretizations for the boundary value problem with solution (\ref{eqn:bumps}).  Here 
    \texttt{GMRES} was allowed to run until it the residual reduction was machine precision.}
  \begin{tabular}{c|c|c|c|c}
    \diagbox{ppw}{Leaves} & $4^3$ & $8^3$ & $16^3$ & $32^3$ \\\hline
    24.0 &\texttt{3.738E-12}&\texttt{3.856E-12}&\texttt{6.736E-12}&\texttt{1.263E-11}\\
    19.2 &\texttt{1.480E-10}&\texttt{3.035E-10}&\texttt{5.432E-10}&\texttt{1.004E-09}\\
    16.0 &\texttt{2.910E-09}&\texttt{6.588E-09}&\texttt{1.376E-08}&\texttt{2.861E-08}\\
    12.0 &\texttt{1.528E-07}&\texttt{2.630E-07}&\texttt{4.573E-07}&\texttt{7.197E-07}\\
    9.60 &\texttt{1.302E-06}&\texttt{1.614E-06}&\texttt{2.856E-06}&\texttt{4.458E-06}\\
  \end{tabular}
  \label{tab:BumpsMinErr2}
\end{table}

Table \ref{tab:BumpsErrT} reports the times and number of iterations
needed for \texttt{GMRES} to converge with the prescribed residual
reduction.  As expected, since the solution to the problem in this
section is more oscillatory than in the previous section, more
iterations are needed to achieve the same accuracy with the same
discretizations. All other trends observed in the experiments from
section \ref{sec:sep} remain the same.

\begin{table}
  \centering
  \caption{The time in seconds and number of iterations needed for
    \texttt{GMRES} to achieve maximum accuracy using the RPRR rule for the
    boundary value problem with solution (\ref{eqn:bumps}.}
  \label{tab:BumpsErrT}
  \begin{tabular}{r|r|r|r|r|r|r|r|r}
    \multirow{2}{*}{\diagbox{ppw}{Leaves}} & \multicolumn{2}{c|}{$4^3$} & \multicolumn{2}{c|}{$8^3$} & \multicolumn{2}{c|}{$16^3$} & \multicolumn{2}{c}{$32^3$} \\\cline{2-9}
         &time[s]& its.&time[s]& its.&time[s]& its.&time[s]& its.\\\hline
    24.0 &    58 & 242 &   90  & 352 &  493  & 559 &  969  & 948 \\
    19.2 &    42 & 205 &  112  & 288 &  281  & 456 &  929  & 827 \\
    16.0 &    39 & 170 &   92  & 236 &  205  & 379 &  702  & 663 \\
    12.0 &    60 & 168 &   80  & 201 &  163  & 320 &  610  & 557 \\
    9.6  &    75 & 163 &  138  & 190 &  310  & 280 &  531  & 480 \\
  \end{tabular}
\end{table}

\section{Concluding remarks}
\label{sec:con}
This manuscript presented an efficient iterative solution technique
for the linear system that results from the HPS discretization of
variable coefficient Helmholtz problems.  The technique is based on a
block-Jacobi preconditioner coupled with \texttt{GMRES}.  The
preconditioner is based on local homogenization and is extremely
efficient to apply thanks to the tensor product nature of the
discretization.  The homogenization is highly effective because the
elements (or leaves) are never many wavelengths in size.

The numerical results illustrate the effectiveness of the solution
technique.  While the method does not ``scale'' in the parallel
computing context, it is extremely efficient.  For example, it is able
to solve a three dimensional mid frequency Helmholtz problem (roughly
100 wavelengths in each direction) requiring over 1 billion
discretization points to achieve approximately 4 digits of accuracy in
under 20 minutes on a cluster.  The numerical results also illustrate
that the HPS method is able to achieve a prescribed accuracy at set
ppw.  This is consistent with the accuracy observed for two
dimensional problem \cite{BeamsGillmanHewett2020,
GillmanBarnettMartinsson2015}. Additionally, the numerical results
indicate that it is sufficient to ask for the relative residual of
approximately 2-3 digits more than the accuracy expected of the
discretization. This will minimize the number of extra iterations
performed by \texttt{GMRES} but does not effect the accuracy of the
solution technique.

As mentioned in the text, the proposed solver does not scale.  In
future work, one could develop a multi-level solver where the
block-Jacobi preconditioner will be used as smoother between the
coarse and the fine grids.  Efficient solvers for the coarse grid is
ongoing work.

\section*{Acknowledgments}
The authors wish to thank Total Energies for their permission to
publish.  The work by A. Gillman is supported by the National Science
Foundation (DMS-2110886). A. Gillman, J.P. Lucero Lorca and N. Beams
are supported in part by a grant from Total Energies.

\bibliographystyle{amsplain}
\bibliography{main}

\end{document}